\newtheorem{thm}{Theorem}[section]
\newtheorem{cor}[thm]{Corollary}
\newtheorem{lem}[thm]{Lemma}
\newtheorem{prop}[thm]{Proposition}
\theoremstyle{definition}
\newtheorem{defn}[thm]{Definition}
\theoremstyle{remark}
\newtheorem{rem}[thm]{Remark}
\numberwithin{equation}{section}
\newtheorem{conj}[thm]{Conjecture}
\newcommand{\norm}[1]{\left\Vert#1\right\Vert}
\newcommand{\abs}[1]{\left\vert#1\right\vert}
\def\cf{\mathcal{ F}}
\def\bn{{\mathbb N}}
\def\br{{\mathbb R}}
\def\d{\delta}
 \def\O{\Omega}
\def\eb{{\mathbf{e}}}
\def\xb{{\mathbf{x}}}
\def\yb{{\mathbf{y}}}
\def\yb{{\mathbf{y}}}
\def\ifn{\infty}
\def\bal1{\textbf{B}_1^+}
\begin{document}
\title[Surjective QSO]
{PROJECTIVE SURJECTIVITY OF QUADRATIC STOCHASTIC OPERATORS ON $L_1$ AND ITS APPLICATION}

 \author[F. Mukhemedov]{Farrukh Mukhamedov$^*$}
\address{Farrukh Mukhamedov\\
 Department of Mathematical Sciences\\
College of Science, The United Arab Emirates University\\
P.O. Box, 15551, Al Ain\\
Abu Dhabi, UAE} \email{{\tt far75m@gmail.com}, {\tt
farrukh.m@uaeu.ac.ae}}

\author[O. Khakimov]{Otabek Khakimov}
\address{Otabek Khakimov\\
Department of Algebra and Its Applications\\
Institute of Mathematics\\
 P.O. Box, 100125, Tashkent\\
Uzbekistan} \email{{\tt hakimovo@mail.ru}}

\author[A.F. Embong]{Ahmad Fadillah Embong}
\address{Ahmad Fadillah Embong\\
 Department of Mathematical Sciences\\
Faculty of Science, Universiti Teknologi Malaysia\\
 81310 Johor Bahru, Malaysia} \email{{\tt ahmadfadillah.90@gmail.com}}

\thanks{$^*$ Corresponding author}

\maketitle

\begin{abstract}
A nonlinear Markov chain is a discrete time stochastic process
whose transitions depend on both the current state and the current
distribution of the process. The nonlinear Markov chain over a
infinite state space can be identified by a continuous mapping
(the so-called nonlinear Markov operator) defined on a set of all
probability distributions (which is a simplex). In the present
paper, we consider a continuous analogue of the mentioned mapping acting on $L^1$-spaces.
Main aim of the current paper is to investigate projective surjectivity of quadratic stochastic operators (QSO) acting on the set of
all probability measures. To prove the main result, we study the surjectivity of
infinite dimensional nonlinear Markov
operators and apply them to the projective surjectivity of a QSO. Furthermore, the obtained result has been applied for the
existence of positive solution of some  Hammerstein integral equations. \\
\noindent {\it
    Mathematics Subject Classification}: 45G10, 47H30, 47H25, 37A30,
47H60\\
{\it Key words}: quadratic stochastic operator; projective surjection; nonlinear equation;
\end{abstract}

\section{Introduction}

Recently nonlinear Markov chains become an interesting subject in
many areas of applied mathematics. These chains are discrete time
stochastic processes whose transitions, which are defined by
stochastic hypermatrix $\mathcal
P=(P_{i_1,\dots,i_m,k})_{i_1,\dots,i_m,k\in I}$, where
$I\subset\bn$ 
depend on both the current state and the current distribution
of the process \cite{Kol}. These processes were introduced by
McKean \cite{Mc} and have been extensively studied in the context
of the nonlinear Chapman- Kolmogorov equation \cite{6} as well as
the nonlinear Fokker-Planck equation \cite{F}. On the other hand,
we stress that such types of chains are generated by tensors
(hypermatrices), therefore, this topic is closely related to the
geometric and algebraic structures of tensors which have been
systematically studied and has  wide applications in scientific
and engineering communities. One of the intrinsic features of
tensors is the concept of tensor eigenvalues and eigenvectors
which turns out to be much more complex than that of the square
matrix case (see for example, \cite{CLN, LN, LZ,QL}).

Let us denote
$$
S^{I} = \left\{\textbf{x}=(x_i)_{i \in I}\in \mathbb{R}^{I}\ : \
x_i\geq 0,\quad \sum\limits_{i\in I}x_i = 1\right\}.
$$
By means of $\mathcal P$ one defines an operator $V: S^I\to S^I$
by
\begin{equation}\label{BB}
(V(\xb))_k = \sum\limits_{i_{1},i_2,\dots,i_m\in I}P_{i_1i_2\dots
i_m,k}x_{i_1}x_{i_2}\dots x_{i_m}, \ \ k\in I.
\end{equation}
This operator is called {\it $m$-ordered polynomial stochastic
operator} (in short $m$-ordered PSO). We note that PSO has a
direct connection with non-linear Markov operators which are
intensively studied by many scientists (see \cite{Kol} for recent
review). Therefore, it is crucial to study several properties of
this operator \cite{CLN,LZ,Q}. One of the important ones is its
surjectivity. It turn out that when the set $I$ is finite, in
\cite{ME18} we have established that the sturjectivity of $V$ is
equivalent to its orthogonal preserving property.

We notice that if the order of the operator is 2 (i.e. $m=2$), then the nonlinear Markov
operator $V$ given by \eqref{BB} is called a
\textit{discrete quadratic stochastic operator (DQSO)} which has many
applications in population genetics \cite{J2013}. Note that such kind of
operators is traced back to Bernstein's work \cite{1} where they
appeared from the problems of population genetics. We refer the
reader to \cite{6,11,MG2015} as the exposition of the recent
achievements and open problems in the theory of the QSO can be
further researched. In \cite{FOA,FOA2} the surjectivity of DQSO and its relation with orthogonal preserving property of $V$ have been
investigated.

On the other hand, there has been much interest in recent years in self-organizing search methods
in the q.s.o. field. Recently, in \cite{GSM} it has been considered QSO on the set of $\sigma$-additive measures on $[0, 1]$ (see also \cite{taha}). In particular, points of the unit interval $[0, 1]$ serve to code (continuum valued) traits
attributed to each individual from a considered population. The main aim of the present paper is to investigate projective surjectivity of such kind of
continuum analogous of QSO. First, in Section 3, we revise the results of \cite{FOA}, and in section 4, we apply them
to the projective surjectivity of QSO acting on the set of probability measures. We notice that very particular cases of QSO have been studied (see for example, \cite{BB,BDP,BP,GSM}). In the last section 5, a short application of the main result to the existence of
positive solutions nonlinear Hammerstein integral equations is carried out. Certain Hammerstein integral equations associated with
finite dimensional DQSO has been investigated in \cite{GMS19}. The
obtained results are even new and opens new insight to this topic.

\section{Discrete Quadratic Stochastic Operators}

In this section we give basic notations and some known results from the theory of discrete quadratic stochastic operators.

Let $E$ be a subset of $\mathbb N$ such that $|E|\geq2$. Denote
$$
S^{E} = \left\{\textbf{x}=(x_i)_{i\in E}\in \mathbb{R}^{E}\ : \ x_i\geq 0,\ \forall i\in E\quad\mbox{and}\ \sum\limits_{i\in E}x_i = 1\right\}.
$$
We notice that there is only two case for cardinality of $E$. So, in special cases we write $S$ or $S^{d-1}$ instead of $S^E$ when $E$ is infinite or $|E|=d$ respectively.
In what follows, by $\eb_i$ we denote the standard basis in $S^E$,
i.e. $\eb_i=(\d_{ik})_{k\in E}$ ($i\in E$), where $\d_{ij}$ is the
Kronecker delta.

Let $V$ be a mapping on $S^E$ defined by
\begin{eqnarray}\label{eqn_qso}
(V(\xb))_{k} = \sum\limits_{i,j\in E}P_{ij,k}x_ix_j, \ \ \ \ \forall k\in E,
\end{eqnarray}
here $P_{ij,k}$ are hereditary coefficients which satisfy
\begin{eqnarray}\label{eqn_coef_cond}
P_{ij,k}\geq0, \quad P_{ij,k}=P_{ji,k}, \quad
\sum\limits_{k\in E}P_{ij,k} = 1, \quad \forall i,j,k\in E
\end{eqnarray}
One can see that $V$ maps $S^E$ into itself and $V$ is called
\textit{Discrete Quadratic Stochastic Operator (DQSO)} \cite{M2000}.

By \textit{support} of $ \xb = (x_{i})_{i\in E}\in S^E $ we mean a
set $supp(\xb) = \left\{ i \in E \ : \  x_{i} \neq 0 \right\}$. A
sequence  $\{A_{k}\}_{k\geq1}$ of sets is called \textit{cover} of a set $
B $ if $ \bigcup\limits_{k=1}^{\infty}A_{k} = B$ and $ A_{i}\cap
A_{j} = \emptyset $ for $ i,j \in \bn $ ($i\neq j$).
Recall that two vectors $ \xb,\yb\in S^E $
are called \textit{orthogonal} (denoted by $ \xb \perp \yb $) if $
supp(\xb) \cap supp(\yb) = \emptyset $. If $ \xb,\yb\in S^E$, then
one can see that  $ \xb \perp \yb $ if and only if $ \xb \circ \yb =
0 $. Here, $\xb \circ \yb=\sum_{i\in E}x_iy_i$.

\begin{defn}
    A DQSO $V$ given by \eqref{eqn_qso} is called \textit{orthogonality
        preserving DQSO (OP DQSO)} if for any $ \xb,\yb \in S $ with $ \xb
    \perp \yb $ one has $ V(\xb) \perp V(\yb) $.
\end{defn}

Recall \cite{M2000,Far_Has_Temir} that a DQSO $ V : S^E \rightarrow S^E $
is called \textit{Volterra} if one has
\begin{eqnarray}\label{eqn_cond_Vol}
    P_{ij,k} =0 \textmd{ if } k \notin \{i,j\}, \ \ \forall i,j,k\in E.
\end{eqnarray}

\begin{rem}
In \cite{Far_Has_Temir} it was given an alternative definition
Volterra operator in terms of extremal elements of $S^E$.
\end{rem}

One can check \cite{M2000,Far_Has_Temir} that a DQSO $V$ is Volterra if and only
if one has
$$
(V(\xb))_{k} = x_{k} \left( 1 + \sum\limits_{i\in E}a_{ki}x_{i} \right), \ \ \ \ \ \forall k\in E,
$$
where $ a_{ki} = 2P_{ik,k}-1 $ for all $ i,k \in E $. One can see that $ a_{ki}=-a_{ik} $.
This representation leads us to the following definitions.

\begin{defn}
A DQSO $V : S^{E} \rightarrow S^{E} $ is called $ \pi $-Volterra if there is a permutation $ \pi $ of $E$ such that $ V $ has the following
form
$$
(V(\xb))_{k} = x_{\pi(k)} \left( 1 + \sum\limits_{i\in E}a_{\pi(k)i}x_{i} \right), \ \ \ \ \ \forall k\in E,
$$
where $ a_{\pi(k)i} = 2P_{i\pi(k),k}-1 $, $ a_{\pi(k)i}=-a_{i\pi(k)} $ for any $ i,k \in E$.
\end{defn}

In \cite{Has_Far_OPQSO_e,taha} it has been proved the
following result.

\begin{thm}\label{VV} Let
$|E|=d$ and $ V $ be a DQSO on $ S^{d-1} $. Then the
following statements are equivalent:
\begin{itemize}
 \item[(i)] $ V $ is orthogonality preserving;
 \item[(ii)] $ V $ is surjective;
 \item[(iii)] $ V $ is $ \pi $-Volterra QSO.
\end{itemize}
\end{thm}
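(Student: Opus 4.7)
My plan is to prove the equivalences cyclically: (iii) $\Rightarrow$ (i), (i) $\Rightarrow$ (iii), (iii) $\Rightarrow$ (ii), (ii) $\Rightarrow$ (iii). The first three directions are routine once the vertex-level behavior of $V$ is extracted; the hard direction is (ii) $\Rightarrow$ (iii).

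For the easy directions: (iii) $\Rightarrow$ (i) follows directly from the $\pi$-Volterra formula, which forces $\mathrm{supp}(V(\xb))\subseteq\pi^{-1}(\mathrm{supp}(\xb))$, so disjoint supports remain disjoint. For (i) $\Rightarrow$ (iii), I would apply OP first to the vertex pairs $\eb_i\perp\eb_j$ to obtain $d$ probability vectors $V(\eb_i)$ with pairwise disjoint supports; a pigeonhole argument in the finite $E$ of cardinality $d$ forces each $V(\eb_i)$ to be a vertex, defining the permutation $\pi$. Then I apply OP to $\eb_i\perp\yb$ for an arbitrary $\yb$ with $y_i=0$; the resulting identity $V(\yb)_{\pi(i)}=\sum_{a,b\neq i}P_{ab,\pi(i)}y_ay_b=0$ over all such $\yb$ forces $P_{ab,\pi(i)}=0$ for $a,b\neq i$, which is precisely the $\pi$-Volterra coefficient condition. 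For (iii) $\Rightarrow$ (ii), after reducing to the standard Volterra case by composing with $\pi$, I would invoke the facts that a standard Volterra operator fixes every vertex, preserves every face of $S^{d-1}$, and is homotopic to the identity through QSOs $V_t=(1-t)\mathbf{Id}+tV$ that stay in the simplex; an induction on $d$ together with a degree-one argument on the interior then delivers surjectivity.

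The main obstacle is (ii) $\Rightarrow$ (iii), which I would approach in two stages. \emph{Stage 1: vertex rigidity.} If $V(\xb)=\eb_k$, then nonnegativity of the $P_{ab,l}$ combined with $\sum_{a,b}P_{ab,l}x_ax_b=0$ for every $l\neq k$ forces $P_{ab,k}=1$ for all $a,b\in\mathrm{supp}(\xb)$; taking $a=b$ gives $V(\eb_a)=\eb_k$, so every vertex in the image admits a vertex-preimage. Surjectivity of $V$ and the finiteness $|E|=d$ then upgrade this to a bijection $V(\eb_i)=\eb_{\pi(i)}$ for some permutation $\pi$. \emph{Stage 2: extracting the Volterra structure.} Relabeling so that $\pi=\mathrm{id}$, for each pair $i\neq j$ I choose a target $\yb=\alpha\eb_i+(1-\alpha)\eb_j$ with $\alpha\in(0,1)$. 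Any preimage $\xb$ provided by surjectivity must satisfy $\mathrm{supp}(\xb)\subseteq\{i,j\}$ by the same support-reduction argument combined with $P_{aa,l}=\delta_{al}$, so $\xb=\beta\eb_i+(1-\beta)\eb_j$; and $\xb$ cannot be a vertex (since vertex images are vertices, not interior edge points), hence $\beta\in(0,1)$. Substituting into $V(\xb)_l=2\beta(1-\beta)P_{ij,l}=0$ for $l\notin\{i,j\}$ yields $P_{ij,l}=0$, establishing Volterra and therefore $\pi$-Volterra. The delicate point is Stage 2: I must guarantee the preimage $\xb$ genuinely lies on the open edge rather than at a vertex, and this rigidity is exactly what Stage 1 supplies.
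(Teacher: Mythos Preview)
The paper does not supply its own proof of this theorem; it is quoted from \cite{Has_Far_OPQSO_e,taha} as a known result, so there is no in-paper argument to compare against. Your proposal is nonetheless a correct self-contained proof of all four implications.

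A couple of remarks. Your Stage~1 in (ii)$\Rightarrow$(iii) is precisely the mechanism the paper itself deploys later in its Theorem~\ref{prop_2} (there for $E=\bn$): any preimage $\xb$ of $\eb_k$ forces $P_{ab,k}=1$ on all of $supp(\xb)$, in particular $P_{aa,k}=1$ for some $a$. Your additional pigeonhole step, upgrading this to a bijection on vertices, is exactly what uses $|E|=d<\infty$ and is what breaks down in the infinite setting (cf.\ the paper's comments after Theorem~\ref{VV}). Your Stage~2 edge argument is clean and correct: the support-reduction really does confine the preimage to the edge $\{i,j\}$ once $P_{aa,l}=\d_{al}$ is known, and the vertex rigidity from Stage~1 excludes $\beta\in\{0,1\}$.

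For (iii)$\Rightarrow$(ii), your homotopy/degree sketch is valid but heavier than strictly necessary; it is worth noting explicitly that $V_t=(1-t)\mathrm{Id}+tV$ is again Volterra (with skew coefficients $t\,a_{ki}$), so each $V_t$ preserves every face and the boundary restriction is genuinely a self-map of $\partial S^{d-1}$ homotopic to the identity. Finally, a harmless labeling caveat: with the paper's Definition of $\pi$-Volterra one gets $V(\eb_i)=\eb_{\pi^{-1}(i)}$, so the permutation you extract from vertex images is the inverse of the paper's $\pi$; this does not affect the substance of your argument.
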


We emphasize that there is a big difference between finite and
infinite dimensional settings. It is known \cite{FOA} that in the infinite
dimensional setting, some implication of Theorem \ref{VV} fails.

\begin{thm}\label{VV1}\cite{FOA} Let $ V $ be infinite dimensional DQSO such that $ V(\eb_{i}) = \eb_{\pi(i)} $ for some permutation $\pi$ of $\bn$. Then the
following statements are
    equivalent:
    \begin{itemize}
     \item[(i)] $ V $ is surjective;
        \item[(ii)] $ V $ is orthogonality preserving;
\item[(iii)] $ V $ is  $ \pi- $Volterra QSO.
          \end{itemize}
          \end{thm}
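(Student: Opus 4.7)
My plan is to establish Theorem \ref{VV1} by proving the chain $(iii) \Rightarrow (ii) \Rightarrow (iii)$ and then closing the circuit through the surjectivity condition. The hypothesis $V(\eb_i) = \eb_{\pi(i)}$ translates, via $(V(\eb_i))_k = P_{ii,k}$, into the clean coefficient identity $P_{ii,k} = \d_{k,\pi(i)}$, and this will serve as the main algebraic input throughout.

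For $(iii) \Rightarrow (ii)$, the explicit $\pi$-Volterra formula $(V(\xb))_k = x_{\pi(k)}\bigl(1 + \sum_{i\in\bn} a_{\pi(k)i} x_i\bigr)$ yields at once $\mathrm{supp}(V(\xb)) \subseteq \pi^{-1}(\mathrm{supp}(\xb))$, so orthogonal inputs are sent to orthogonal outputs. For the reverse direction $(ii) \Rightarrow (iii)$, I would fix distinct $i,j \in \bn$ and any $l \notin \{i,j\}$, then apply orthogonality preservation to the pair $\xb = \tfrac12(\eb_i + \eb_j)$ and $\yb = \eb_l$. Expanding $(V(\xb))_{\pi(l)}$ and using $P_{ii,\pi(l)} = P_{jj,\pi(l)} = 0$ (since $l \neq i,j$) forces $P_{ij,\pi(l)} = 0$. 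Letting $l$ range over $\bn \setminus \{i,j\}$ yields $P_{ij,m} = 0$ for every $m$ outside $\{\pi(i), \pi(j)\}$, from which I can read off the $\pi$-Volterra representation; the antisymmetry $a_{\pi(k)i} = -a_{i\pi(k)}$ then drops out of the stochasticity relation $\sum_k P_{ij,k} = 1$.

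Closing the loop through surjectivity is the main technical hurdle. For $(iii) \Rightarrow (i)$, I would invert the $\pi$-Volterra system explicitly: given $\yb \in S$, solve the equations $x_{\pi(k)}\bigl(1 + \sum_i a_{\pi(k)i} x_i\bigr) = y_k$ successively for the coordinates of $\xb$, either by truncating to finite dimensions (where Theorem \ref{VV} already supplies surjectivity) and passing to a suitable limit, or by setting up a Schauder-type fixed point argument on $S$. The delicate point is verifying that the constructed $\xb$ genuinely lies in $S$, i.e.\ has coordinates summing to one; this should be secured by the stochasticity of $V$ and the uniform bound $|a_{\pi(k)i}| \leq 1$. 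For the converse $(i) \Rightarrow (iii)$, I would proceed by contradiction: supposing that some $P_{i_0 j_0, k_0} > 0$ with $k_0$ outside the admissible $\pi$-Volterra index set, the rigidity imposed by $V(\eb_i) = \eb_{\pi(i)}$ should force certain boundary strata of $S$ to be missed by $V$, contradicting surjectivity. This last implication is the principal obstacle, since the infinite-dimensional setting precludes the Brouwer-type degree arguments that drive the finite-dimensional Theorem \ref{VV}, and the hypothesis $V(\eb_i) = \eb_{\pi(i)}$ is exactly what compensates for the lack of compactness of $S$.
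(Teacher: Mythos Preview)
The paper does not supply a proof of Theorem~\ref{VV1}; it is quoted verbatim from \cite{FOA}. Hence there is no in-paper argument to compare against, and your proposal must be assessed on its own merits.

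Your $(ii)\Leftrightarrow(iii)$ is correct and clean: the hypothesis $V(\eb_i)=\eb_{\pi(i)}$ yields $P_{ii,k}=\d_{k,\pi(i)}$, and testing orthogonality on the pair $\tfrac12(\eb_i+\eb_j)$, $\eb_l$ with $l\notin\{i,j\}$ isolates $P_{ij,\pi(l)}=0$ exactly as you say; since $\pi$ is a bijection this gives $P_{ij,m}=0$ for all $m\notin\{\pi(i),\pi(j)\}$, and the $\pi$-Volterra form together with the antisymmetry of the $a$'s follows from stochasticity. The implication $(iii)\Rightarrow(ii)$ via $\mathrm{supp}(V(\xb))\subseteq\pi^{-1}(\mathrm{supp}(\xb))$ is likewise correct.

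The two surjectivity implications, however, are not yet proofs. For $(iii)\Rightarrow(i)$ you suggest truncating to $S^{n-1}$, invoking Theorem~\ref{VV}, and passing to a limit; but $S$ is not compact in $\ell^1$, and a coordinatewise limit of points of $S$ need not lie in $S$ (mass can escape to infinity). The bound $|a_{\pi(k)i}|\le1$ does not by itself prevent this, and you have not indicated how to secure tightness of the approximating sequence. A Schauder argument faces the same obstacle: $S$ is closed and convex but not compact, so a fixed-point theorem on $S$ requires an additional compactness ingredient you have not provided. For $(i)\Rightarrow(iii)$ you offer only the phrase ``boundary strata of $S$ should be missed''; you have not exhibited a concrete $\yb\in S$ with no preimage, nor explained how a single offending coefficient $P_{i_0j_0,k_0}>0$ with $k_0\notin\{\pi(i_0),\pi(j_0)\}$ actually produces such a $\yb$. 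These two directions are precisely where the substance of the theorem lies, and your proposal has not yet supplied the key ideas for either.
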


\begin{thm}\cite{FOA}
Let $V$ be a surjective and OP infinite dimensional DQSO. Then $V$ is a $\pi$-Volterra for some permutation
$\pi:\mathbb N\to\mathbb N$.
\end{thm}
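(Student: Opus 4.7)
The plan is to reduce the statement to Theorem \ref{VV1}: it suffices to exhibit a permutation $\pi$ of $\bn$ with $V(\eb_i)=\eb_{\pi(i)}$ for every $i$, since then Theorem \ref{VV1} immediately yields the $\pi$-Volterra form of $V$. Thus the entire task is to extract such a permutation from the combined surjectivity and orthogonality-preservation hypotheses.

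I would first use surjectivity to locate preimages of the basis vectors. For each $m\in\bn$ fix $\xb^{(m)}\in S$ with $V(\xb^{(m)})=\eb_m$ and set $I_m:=supp(\xb^{(m)})$. The equation $(V(\xb^{(m)}))_k=0$ for $k\neq m$, read via \eqref{eqn_qso}, is a sum of nonnegative terms $P_{ij,k}x_i^{(m)}x_j^{(m)}$, so every term must vanish; hence $P_{ij,k}=0$ whenever $i,j\in I_m$ and $k\neq m$, and then the normalization in \eqref{eqn_coef_cond} forces $P_{ij,m}=1$ for all such $i,j$. Specializing to $i=j\in I_m$ yields $V(\eb_i)=\eb_m$ for every $i\in I_m$. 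The OP hypothesis now rules out $|I_m|\geq 2$: if $i\neq i'$ both belonged to $I_m$, then $\eb_i\perp\eb_{i'}$, while $V(\eb_i)=V(\eb_{i'})=\eb_m$ are manifestly not orthogonal. Denoting the unique element of $I_m$ by $\tau(m)$ defines a map $\tau:\bn\to\bn$ with $V(\eb_{\tau(m)})=\eb_m$, and $\tau$ is injective because the images $\eb_m$ are pairwise distinct.

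The main obstacle is showing that $\tau$ is surjective, since in the infinite-dimensional setting one cannot conclude this by a cardinality count. I would again appeal to OP: if some $n\in\bn$ were not in the range of $\tau$, then $\eb_n\perp\eb_{\tau(m)}$ for every $m$, and OP would force $V(\eb_n)\perp V(\eb_{\tau(m)})=\eb_m$ for all $m$, i.e.\ $(V(\eb_n))_m=0$ for every $m$ — contradicting $V(\eb_n)\in S$. Hence $\tau$ is a permutation of $\bn$, and setting $\pi:=\tau^{-1}$ gives $V(\eb_i)=\eb_{\pi(i)}$ for all $i$. Combined with the surjectivity of $V$, Theorem \ref{VV1} then delivers the $\pi$-Volterra conclusion.
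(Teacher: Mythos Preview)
The paper does not actually supply a proof of this theorem; it is quoted from \cite{FOA} without argument, so there is nothing in the present paper to compare your proposal against. That said, your argument is correct and is the natural route: use surjectivity to produce preimages $\xb^{(m)}$ of the $\eb_m$, exploit nonnegativity to read off $P_{ij,k}=0$ (for $k\neq m$) and $P_{ij,m}=1$ on $supp(\xb^{(m)})$, then use OP twice --- first to force each $supp(\xb^{(m)})$ to be a singleton $\{\tau(m)\}$, and second to show that every index lies in the range of $\tau$ --- so that $\pi:=\tau^{-1}$ is a genuine permutation with $V(\eb_i)=\eb_{\pi(i)}$. Theorem~\ref{VV1} then finishes the job. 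Each step is sound; in particular your surjectivity argument for $\tau$ (an index $n$ outside $\tau(\bn)$ would make $V(\eb_n)$ orthogonal to every $\eb_m$, which is impossible in $S$) is exactly the point where the infinite-dimensional case requires OP beyond what a cardinality count could give.
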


\section{Surjectivity of DQSO}

In this section, we are going to provide a sufficient condition for the surjectivity of infinite dimensional DQSOs.

Let $E$ be a subset of $\mathbb N$. We denote
$$
{\bf B}_1^+=\left\{\textbf{x}=(x_i)_{i \in E}\in
\mathbb{R}^{E}\ :\ x_i\geq0,\ \ \ \forall i\in E\ \mbox{and }
\sum_{j\in E}x_j\leq1\right\}.
$$
We can extend each DQSO $V$ from $S^E$ to ${\bf B}_1^+$ by the same formula \eqref{eqn_qso}. Then the following crucial result is true.
\begin{lem}\label{lem1}
Let $V$ be a DQSO on ${\bf B}_1^+$. Then one has
$$
V(S^E)\subset S^E,\ \ \ \ V({\bf B}_1^+\setminus S^E)\subset{\bf B}_1^+\setminus S^E.
$$
\end{lem}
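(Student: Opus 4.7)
The plan is to reduce both inclusions to a single identity, namely
\[
\sum_{k\in E}(V(\xb))_k \;=\; \Bigl(\sum_{i\in E}x_i\Bigr)^{2}, \qquad \xb\in\bal1 .
\]
Once this is in hand, the statement follows immediately from the trivial observation that the map $t\mapsto t^2$ carries $\{1\}$ to $\{1\}$ and $[0,1)$ to $[0,1)$: if $\sum_i x_i=1$ then $\sum_k (V(\xb))_k=1$, and if $\sum_i x_i<1$ then $\sum_k(V(\xb))_k<1$.

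First I would check that $V(\xb)\in\bal1$, i.e.\ that each coordinate $(V(\xb))_k$ is nonnegative and that $\sum_k(V(\xb))_k\le 1$. Nonnegativity is immediate from $P_{ij,k}\ge 0$ and $x_i\ge 0$ together with the fact that the series $\sum_{i,j}P_{ij,k}x_ix_j$ consists of nonnegative terms. The upper bound will fall out of the key identity above.

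To establish the key identity, I would simply write
\[
\sum_{k\in E}(V(\xb))_k
\;=\;\sum_{k\in E}\sum_{i,j\in E}P_{ij,k}x_ix_j
\;=\;\sum_{i,j\in E}x_ix_j\sum_{k\in E}P_{ij,k}
\;=\;\sum_{i,j\in E}x_ix_j
\;=\;\Bigl(\sum_{i\in E}x_i\Bigr)^{\!2},
\]
where the interchange of summations is legitimate by Tonelli's theorem (all summands are nonnegative) and the third equality uses the stochasticity condition $\sum_{k\in E}P_{ij,k}=1$ from \eqref{eqn_coef_cond}.

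There is essentially no obstacle here; the only point that deserves a word is the interchange of the double and triple sums in the infinite setting, and this is handled by nonnegativity. The proof therefore reduces to a one-line computation, and both inclusions $V(S^E)\subset S^E$ and $V(\bal1\setminus S^E)\subset\bal1\setminus S^E$ are read off from the identity by comparing the value of $\sum_i x_i$ with $1$.
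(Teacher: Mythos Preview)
Your proof is correct and follows essentially the same approach as the paper: both establish the identity $\sum_{k}(V(\xb))_k=\bigl(\sum_i x_i\bigr)^2$ via the stochasticity condition $\sum_k P_{ij,k}=1$, and then read off the two inclusions. The paper phrases this through the decomposition $\bal1=\bigcup_{r\in[0,1]}S_r^E$ and shows $V(S_r^E)\subset S_{r^2}^E$, while you go straight to the scalar identity and invoke $t\mapsto t^2$; your explicit mention of Tonelli for the interchange of sums is a small bonus in rigor, but the argument is otherwise the same.
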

\begin{proof}
For a given $r\geq0$ we denote
$$
S_r^{E} = \left\{\textbf{x}=(x_i)_{i \in E}\in \mathbb{R}^{E}\ :\ x_i\geq0,\ \ \ \forall i\in E\ \mbox{and }
\sum_{j\in E}x_j=r\right\}.
$$
We notice that $S_1^E=S^E$.
Then it is
obvious that
$$
{\bf B}_1^+=\bigcup_{r\in[0,1]}S_r^E.
$$
Take any $r\in[0,1]$ and  an arbitrary $\xb\in S_r^E$. One can check that
$(V(\xb))_k\geq0$ for all $k\geq1$. Furthermore, using \eqref{eqn_coef_cond}
we get
$$
\sum\limits_{k=1}^\infty(V(\xb))_k=\sum\limits_{i,j\in E}x_{i}x_{j}=r^2.
$$
From this we find $V(\xb)\in S_{r^2}^E$. Hence, we infer that
$$
V(S^E)\subset S^E\ \ \mbox{and} \ \ V({\bf B}_1^+\setminus S^E)\subset{\bf B}_1^+\setminus S^E,
$$
which completes the proof.
\end{proof}
\begin{thm}\label{lem2}
Let $V$ be a DQSO on ${\bf B}_1^+$. Then the following statements are equivalent:
\begin{itemize}
\item[(i)] $V$ is surjective on ${\bf B}_1^+$;
\item[(ii)] $V$ is surjective on $S^E$;
\item[(iii)] $V$ is surjective on ${\bf B}_1^+\setminus S^E$.
\end{itemize}
\end{thm}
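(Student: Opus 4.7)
The plan is to exploit the quadratic homogeneity of $V$, namely $V(\lambda\xb)=\lambda^2 V(\xb)$ whenever $\lambda\xb\in{\bf B}_1^+$, together with the refinement of Lemma~\ref{lem1} that is actually established inside its proof: $V(S_r^E)\subset S_{r^2}^E$ for every $r\in[0,1]$. This refined invariance is what will let me pass preimages between the different slices.

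The implications (ii)$\Rightarrow$(i) and (iii)$\Rightarrow$(i) are immediate from the decomposition ${\bf B}_1^+=S^E\cup({\bf B}_1^+\setminus S^E)$. Conversely, (i)$\Rightarrow$(ii) and (i)$\Rightarrow$(iii) follow from Lemma~\ref{lem1}: if $\yb\in S^E$ and $\xb\in{\bf B}_1^+$ with $V(\xb)=\yb$, then $\xb$ cannot belong to the $V$-invariant set ${\bf B}_1^+\setminus S^E$, so $\xb\in S^E$; the symmetric argument gives (i)$\Rightarrow$(iii).

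The substance of the theorem is therefore the equivalence (ii)$\Leftrightarrow$(iii), which I would obtain via scaling. For (ii)$\Rightarrow$(iii), let $\yb\in{\bf B}_1^+\setminus S^E$, so $\yb\in S_r^E$ with $r\in[0,1)$; if $r=0$ take $\xb=0$, otherwise write $\yb=r\yb'$ with $\yb'\in S^E$, use (ii) to choose $\xb'\in S^E$ with $V(\xb')=\yb'$, and set $\xb:=\sqrt{r}\,\xb'\in S_{\sqrt{r}}^E\subset{\bf B}_1^+\setminus S^E$; homogeneity gives $V(\xb)=rV(\xb')=\yb$. For (iii)$\Rightarrow$(ii), fix any $r\in(0,1)$ and a given $\yb\in S^E$, consider $r^2\yb\in S_{r^2}^E\subset{\bf B}_1^+\setminus S^E$, and apply (iii) to obtain $\xb\in{\bf B}_1^+\setminus S^E$ with $V(\xb)=r^2\yb$; the refined invariance forces $\xb\in S_r^E$ (if $\xb\in S_t^E$ then $V(\xb)\in S_{t^2}^E$, so $t^2=r^2$), hence $\xb=r\xb'$ with $\xb'\in S^E$, and homogeneity yields $V(\xb')=\yb$.

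There is no genuine obstacle: the proof is driven by the quadratic homogeneity of $V$ and the invariance of the level sets $S_r^E$. The only point to be careful with is that one must use the precise fact $V(S_r^E)\subset S_{r^2}^E$ (not just the dichotomy stated in Lemma~\ref{lem1}) when recovering the preimage in (iii)$\Rightarrow$(ii), in order to guarantee that the preimage lies on the correct level set so that the scaling cancels cleanly.
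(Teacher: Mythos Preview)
Your argument is correct and essentially identical to the paper's: both exploit the quadratic homogeneity $V(\lambda\xb)=\lambda^{2}V(\xb)$ together with the slice invariance $V(S_r^E)\subset S_{r^2}^E$ established inside the proof of Lemma~\ref{lem1}, and the paper proves the cycle (i)$\Rightarrow$(ii)$\Rightarrow$(iii)$\Rightarrow$(i) by exactly the scaling between level sets that you describe. The only quibble is presentational: your claim that ``(ii)$\Rightarrow$(i) and (iii)$\Rightarrow$(i) are immediate from the decomposition'' is not true as a standalone statement---it tacitly presupposes the equivalence (ii)$\Leftrightarrow$(iii) that you only establish in the following paragraph, so the order of those two paragraphs should be swapped (or the dependence made explicit).
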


\begin{proof} Thanks to Lemma \ref{lem1} the implication $(i)\Rightarrow(ii)$ is obvious.

$(ii)\Rightarrow(iii)$. Assume that $V$ is surjective on $S^E$.
Let $r\in(0,1)$, then we define an operator $T_r:S_r^E\to S^E$ as follows $T_r(\xb)=r^{-1}\xb$ for all $\xb\in S_r^E$.
We notice that $T_r$ is a bijection. Then keeping in mind $V(rS^E)=r^2V(S^E)$ and $r^2S^E=S_{r^2}^E$, one gets
$$
V(S_r^E)=V(rT_r(S^E_r))=V(rS^E)=S_{r^2}^E.
$$
From the arbitrariness of $r$ and $V({\bf 0})={\bf 0}$, we find
$$
V\bigg(\bigcup_{r\in[0,1)}S_r^E\bigg)=\bigcup_{r\in[0,1)}S_r^E,
$$
which means $V({\bf B}_1^+\setminus S^E)={\bf B}_1^+\setminus S^E$.

$(iii)\Rightarrow(i)$. One can see that $V(S_r^E)\subset S_{r^2}^E$ for any $r\in[0, 1)$. Due to the surjectivity of $V$ on
$\bigcup_{r\in[0,1)}S_r^E$, we conclude
$$
V(S_r^E)=S_{r^2}^E,\ \ \ \ \forall r\in[0, 1).
$$
Then, for any $r>0$, one has
$$
V(S^E)=V(r^{-1}S_r^E)=r^{-2}V(S_r^E)=r^{-2}S_{r^2}^E=S^E.
$$
The last one together with $V({\bf B}_1^+\setminus S^E)={\bf B}_1^+\setminus S^E$ implies that $V({\bf B}_1^+)={\bf B}_1^+$.
This completes the proof.
\end{proof}

\begin{rem}
Thanks to Theorem \ref{lem2}, to establish the surjectivity of DQSO $V$ on ${\bf B}_1^+$ it is enough to consider it only on $S^E$.
\end{rem}

Let us recall the Cauchy Product which has the following form:
\begin{equation}\label{Cauchy}
\left( \sum\limits_{i=1}^{\ifn} x_{i} \right)^{m} = \sum\limits_{i_1,\dots,i_m\in \bn}x_{i_{1}} \cdots x_{i_{m}},\ \ \ \forall m\in\mathbb N,
\end{equation}
where $\sum_{i=1}^{\ifn} x_{i} < \ifn$.

\begin{thm}\label{prop_2}
    Let $V$ be a surjective DQSO on $S$. Then there exists a
    sequence
    $\{j_k\}_{k\geq1}\subset\mathbb N$
    such that $P_{j_kj_k,k}=1$ for all $k\in\bn$.
\end{thm}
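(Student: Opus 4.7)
The plan is to exploit surjectivity to pull back each basis vector $\eb_k \in S$, and then read off the stochastic coefficients by using the nonnegativity structure of the kernel $P_{ij,k}$.

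First, fix $k \in \bn$. By surjectivity of $V$ on $S$, there exists $\xb^{(k)} = (x_i^{(k)})_{i\in\bn} \in S$ with $V(\xb^{(k)}) = \eb_k$. This means
\[
\sum_{i,j \in \bn} P_{ij,m}\, x_i^{(k)} x_j^{(k)} = (V(\xb^{(k)}))_m = \d_{km}, \qquad m \in \bn.
\]
For $m \neq k$ the sum is zero, and because every summand $P_{ij,m}\, x_i^{(k)} x_j^{(k)}$ is nonnegative, each such term must vanish. Hence, for every $m \neq k$ and every pair $(i,j)$ with $x_i^{(k)}, x_j^{(k)} > 0$, one has $P_{ij,m} = 0$.

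Next, since $\xb^{(k)} \in S$, its support $\mathrm{supp}(\xb^{(k)})$ is nonempty; pick any $j_k \in \mathrm{supp}(\xb^{(k)})$. Applying the vanishing above to the pair $(i,j) = (j_k, j_k)$, we get $P_{j_k j_k, m} = 0$ for all $m \neq k$. Combined with the stochasticity condition $\sum_{m \in \bn} P_{j_k j_k, m} = 1$ from \eqref{eqn_coef_cond}, we conclude $P_{j_k j_k, k} = 1$. Repeating this construction for each $k \in \bn$ yields the required sequence $\{j_k\}_{k\ge 1}$.

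The argument is essentially a one-step deduction once the right preimage is produced, so there is no serious obstacle; the only subtlety is the implicit use of nonnegativity to pass from the vanishing of an infinite sum to the vanishing of each term, together with the remark that $\mathrm{supp}(\xb^{(k)})$ is automatically nonempty because $\xb^{(k)}$ lies on the simplex. Note also that the sequence $\{j_k\}$ need not be injective a priori and need not equal a permutation image, which is consistent with the fact that the sharper $\pi$-Volterra conclusions of Theorems \ref{VV} and \ref{VV1} required additional hypotheses such as $V(\eb_i) = \eb_{\pi(i)}$.
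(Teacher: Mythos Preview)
Your proof is correct. Both you and the paper begin by pulling back the vertex $\eb_k$ along $V$ to obtain some $\xb^{(k)}\in S$, but you then diverge. The paper examines the equation $(V(\xb^{(k)}))_k=1$: it splits into the cases $|\mathrm{supp}(\xb^{(k)})|=1$ and $|\mathrm{supp}(\xb^{(k)})|>1$, and in the second case argues by contradiction (using the Cauchy product formula \eqref{Cauchy}) that $P_{ij,k}=1$ for all $i,j$ in the support. You instead look at the complementary equations $(V(\xb^{(k)}))_m=0$ for $m\neq k$: nonnegativity forces $P_{j_kj_k,m}=0$ for every such $m$, and then the stochasticity constraint $\sum_m P_{j_kj_k,m}=1$ immediately yields $P_{j_kj_k,k}=1$. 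Your route is shorter, avoids the case distinction and the contradiction argument entirely, and in fact shows slightly more (namely $P_{ij,m}=0$ for all $i,j\in\mathrm{supp}(\xb^{(k)})$ and $m\neq k$, which by stochasticity also recovers the paper's conclusion $P_{ij,k}=1$ for all such $i,j$). The paper's only additional content is the canonical choice $j_k=\inf I_k$, which plays no role in the statement itself.
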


\begin{proof} Let us denote
$$
I_{k} = \left\{ j \in \mathbb N:\ P_{jj, k} = 1 \right\}
$$
    First of all, we  show that surjectivity of $V$ implies $ I_{k}\neq\emptyset$ for any $ k \in\mathbb N$.
    Thanks to the surjectivity of $V$, for every $ k\in\mathbb N$ there is  an $ \xb^{(k)} \in S $ such that
    \begin{eqnarray}\label{eqn_prop_2}
    V(\xb^{(k)}) = \eb_{k}.
    \end{eqnarray}
    Now, we consider two cases.

    \textbf{Case 1:} Let $ \abs{supp(\xb^{(k)})} = 1 $. Then one can find a number $j_k\in\bn$ such that $supp(\xb^{(k)}) = \{j_{k}\} $ and
$$
(V(\xb^{(k)}))_{k} = P_{j_{k}j_{k},k} x_{j_{k}}^2 =1,
$$
which yields that $ P_{j_{k}j_{k},k} =1 $. Hence we get $ j_{k} \in I_{k}$.

    \textbf{Case 2:} Let $|supp(\xb^{(k)})|>1$ and $A:=supp(\xb^{(k)})$. From \eqref{eqn_prop_2} we get
    \begin{eqnarray}
    (V(\xb^{(k)}))_{k} = \sum\limits_{i,j\in A}P_{ij,k}x_{i}x_{j} =1.
    \end{eqnarray}
    Now suppose that there are $\bar{i},\bar{j}\in A$ such that $P_{\bar{i}\bar{j},k}<1$.
    One has
    \begin{eqnarray}
    (V(\xb^{(k)}))_{k} &=& \sum\limits_{i,j\in A}P_{ij,k}x_{i}x_{j} \nonumber \\
    &\leq& \sum\limits_{\{i,j\}\subset A\setminus\{\bar{i},\bar{j}\}}x_{i}x_{j} + P_{\bar{i}\bar{j},k}x_{\bar{i}}x_{\bar{j}} \nonumber \\
    &<& \sum\limits_{i,j\in A}x_{i}x_{j}. \nonumber
    \end{eqnarray}
    Using \eqref{Cauchy} the right side of the last one we get
    $$
    (V(\xb^{(k)}))_{k}<1,
    $$
    which contradicts to \eqref{eqn_prop_2}. So, we conclude that
$$
P_{ij,k} = 1,\ \ \ \ \ \ \forall i,j\in{A}.
$$
    In particular we have $ P_{ii,k} =1 $ for any $ i \in A $. This means that $A \subset I_{k}$. From this we immediately get $I_k\neq\emptyset$
for any $k\geq1$.

Thus, we know that $I_k\neq\emptyset$
for every $k\geq1$. Now, we can define a sequence $\{j_k\}_{k\geq1}$ by $j_k=\inf I_k$.  Due to
construction of $I_k$, we get
$P_{j_kj_k,k}=1$ for all $k\in\mathbb N$. This completes the proof.
\end{proof}

Next result gives a sufficient condition for the surjectivity of DQSO.

\begin{thm}\label{thm_surject}
Let $V$ be a DQSO on $S$. Assume that there exists a
sequence
$\{j_n\}_{n\geq1}\subset\mathbb N$ such that
\begin{equation}\label{nec1}
P_{j_nj_m,k}=0,\ \ \ \ \ \forall k\notin\{n,m\}.
\end{equation}
Then $V$ is surjective.
\end{thm}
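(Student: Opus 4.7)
The plan is to exploit the hypothesis to identify $V$, when restricted to the ``slice'' of $S$ supported on $\{j_n\}_{n\in\bn}$, with an identity-Volterra DQSO on $S$, and then invoke Theorem \ref{VV1}.

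First, I would observe that setting $n=m$ in \eqref{nec1} together with the stochasticity relation $\sum_k P_{j_nj_n,k}=1$ forces $P_{j_nj_n,n}=1$, and hence $V(\eb_{j_n})=\eb_n$. Next, I would consider the subset $S_{*}=\{\xb\in S : x_i=0 \text{ for } i\notin\{j_n:n\in\bn\}\}$, which is in bijection with $S$ via $\zb\mapsto\sum_n z_n\eb_{j_n}$. For $\xb=\sum_n z_n\eb_{j_n}\in S_{*}$, a direct computation using \eqref{nec1} and the symmetry $P_{ij,k}=P_{ji,k}$ should collapse
$$(V(\xb))_k=\sum_{n,m\in\bn}P_{j_nj_m,k}z_nz_m$$
to the pairs with $k\in\{n,m\}$, yielding
$$(V(\xb))_k=z_k^2+2z_k\sum_{m\neq k}P_{j_kj_m,k}z_m.$$

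This shows that the induced map $\tilde V:S\to S$ given by $\tilde V(\zb)_k:=(V(\xb))_k$ is itself a DQSO with coefficients $\tilde P_{nm,k}=P_{j_nj_m,k}$ satisfying $\tilde P_{nm,k}=0$ for $k\notin\{n,m\}$. This is precisely the Volterra property, and $\tilde V(\eb_n)=\eb_n$. Theorem \ref{VV1} (applied with $\pi=\mathrm{id}$) then delivers the surjectivity of $\tilde V$ on $S$.

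Finally, given any $\yb\in S$, I would pick $\zb\in S$ with $\tilde V(\zb)=\yb$ and lift to $\xb=\sum_n z_n\eb_{j_n}\in S_{*}\subseteq S$; by the computation above $V(\xb)=\yb$, which proves surjectivity of $V$. The main substantive step will be the middle one---the bookkeeping to isolate the pairs $(n,m)$ with $k\in\{n,m\}$, to verify the stochasticity of $\tilde V$, and thereby to reduce the problem to the prior surjectivity theorem. Once this reduction is made, the conclusion is immediate, so I anticipate no genuine obstacle beyond this accounting.
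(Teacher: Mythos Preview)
Your proposal is correct and follows essentially the same route as the paper: define the reindexing bijection $T:S\to S_I$ (your $\zb\mapsto\sum_n z_n\eb_{j_n}$, the paper's $T(\xb)_k=x_{\a(k)}$ with $\a(k)=j_k$), observe that the induced operator $\tilde V$ with coefficients $\tilde P_{nm,k}=P_{j_nj_m,k}$ is Volterra, invoke Theorem~\ref{VV1} with $\pi=\mathrm{id}$, and lift back. The only small bookkeeping point you leave implicit (as does the paper) is that the $j_n$ are pairwise distinct, which follows at once from your observation $P_{j_nj_n,n}=1$ together with $\sum_kP_{j_nj_n,k}=1$.
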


\begin{proof}
Let $I:=\{j_n\}_{n\geq1}$ be a subset of $\mathbb N$ for which \eqref{nec1} is satisfied.  Now, we define a new cubic matrix $\tilde{\mathcal P}=(\tilde{P}_{ij,k})_{i,j,k\geq1}$
as follows
$$
\tilde{P}_{ij,k}=\left\{
\begin{array}{ll}
P_{\alpha(i)\alpha(j),k}, & k\in\{i,j\},\\
0, & \mbox{otherwise},
\end{array}
\right.
$$
where $\alpha(k)=j_k$ for all $k\geq1$. We consider a DQSO $\tilde{V}$ is given by
$$
(\tilde{V}(\xb))_k=\sum_{i,j\geq1}\tilde{P}_{ij,k}x_{i}x_{j},\ \ \ \forall\xb\in S.
$$
Due to the construction of the cubic matrix $\tilde{\mathcal P}$ one concludes that $\tilde{V}$ is a Volterra DQSO.
Then, thanks to Theorem \ref{VV1},  the operator $\tilde{V}$ is a surjection on $S$.

Let us denote $S_I=\{\xb\in S: supp(\xb)\subset I\}$. Then it is obvious that
operator $T:S\to S_I$ given by $T(\xb)_k=x_{\alpha(k)}$ is a bijection.
Furthermore, we have
$\tilde{V}=V\circ T$. Keeping in mind that $\tilde{V}$ is surjective and $T$ is bijection,
we infer that $V=\tilde{V}\circ T^{-1}$ is surjective, which completes the proof.
\end{proof}

We stress that unfortunately, we are not able to prove that \eqref{nec1} is a necessary for the surjectivity of $V$. However, all construced examples show it is true. So, we may formulate the following conjecture.

\begin{conj}
Let $V$ be a surjective DQSO on $S$. Then there exists a sequence $\{j_n\}_{n\geq1}\subset\mathbb N$ such that \eqref{nec1} holds.
\end{conj}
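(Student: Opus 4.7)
The natural starting point is Theorem~\ref{prop_2}, which already produces, from surjectivity alone, a sequence $\{j_n\}_{n\geq 1}\subset\bn$ with $P_{j_nj_n,n}=1$; these indices are automatically pairwise distinct, since $j_n=j_m$ for $n\neq m$ would yield $P_{j_nj_n,n}=P_{j_nj_n,m}=1$ against $\sum_k P_{j_nj_n,k}=1$. The plan is therefore to verify \eqref{nec1} for exactly this sequence, by exploiting surjectivity on the edges of the simplex.

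Fix $n\neq m$. For each $\a\in(0,1)$, use surjectivity to select a preimage $\xb_\a\in S$ of the edge point $\yb_\a:=\a\eb_n+(1-\a)\eb_m$, and set $A_\a:=supp(\xb_\a)$. The equation $(V(\xb_\a))_k=0$ for $k\notin\{n,m\}$, together with the strict positivity of the coordinates of $\xb_\a$ on $A_\a$, immediately yields
$$
P_{ij,k}=0 \quad \text{for all } i,j\in A_\a \text{ and all } k\notin\{n,m\},
$$
and the normalization $\sum_k P_{ij,k}=1$ then upgrades this to $P_{ij,n}+P_{ij,m}=1$ on $A_\a\times A_\a$. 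Hence, if one can secure, for some $\a$, a preimage whose support contains both $j_n$ and $j_m$, then taking $(i,j)=(j_n,j_m)$ in the display above delivers \eqref{nec1} for the pair $(n,m)$, and the conjecture follows.

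A preliminary observation rules out the trivial scenario in which every $\xb_\a$ is a vertex: a singleton preimage $\xb_\a=\eb_j$ requires $P_{jj,n}=\a$, $P_{jj,m}=1-\a$, thereby assigning a distinct index $j\in\bn$ to each value of $\a$, which is impossible for uncountably many $\a$. Consequently a cocountable set of $\a\in(0,1)$ admits preimages with $|A_\a|\geq 2$, and a pigeonhole argument on the countable family of finite subsets of $\bn$ produces a single support $A$ of size at least two that is shared by uncountably many preimages, on which $P_{ij,k}=0$ for $i,j\in A$ and $k\notin\{n,m\}$.

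The main obstacle, and presumably the reason the statement is left open, is to force this shared support $A$ to meet $\{j_n,j_m\}$: a priori $A$ could be a ``parallel'' pair $\{i,i'\}$ disjoint from $\{j_n,j_m\}$ with $P_{ii,n}+P_{ii,m}=P_{i'i',n}+P_{i'i',m}=P_{ii',n}+P_{ii',m}=1$. The most plausible line of attack is to let $\a\to 1^-$ (respectively $\a\to 0^+$) and show that any accumulation point of $\xb_\a$ is a preimage of $\eb_n$ (respectively $\eb_m$), then invoke Case~1 of the proof of Theorem~\ref{prop_2} to identify such a limit with $\eb_{j_n}$ (respectively $\eb_{j_m}$), and finally use a continuity/interpolation argument to drag $j_n$ and $j_m$ into the support of some intermediate preimage. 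The genuinely hard part is that $S$ is not compact in $\ell^1$, so a tightness or diagonal extraction argument is needed, and one may be forced to combine the resulting constraints across several pairs $(n,m)$, $(n,\ell)$, $(m,\ell)$ in order to propagate the Volterra structure combinatorially and eliminate the parallel supports above.
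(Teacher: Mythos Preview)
The statement you are attempting to prove is labeled in the paper as a \emph{Conjecture}, not a theorem: the authors write, immediately before it, that they ``are not able to prove that \eqref{nec1} is a necessary [condition] for the surjectivity of $V$'' and therefore formulate it as an open problem. There is consequently no proof in the paper to compare your proposal against.

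Your write-up is, appropriately, not a proof either but a strategy sketch, and you correctly isolate the crux: from surjectivity one easily extracts, for each pair $n\neq m$, a family of preimages of the edge $[\eb_n,\eb_m]$ whose supports carry the Volterra-type vanishing $P_{ij,k}=0$ for $k\notin\{n,m\}$, but nothing so far forces those supports to contain the specific indices $j_n,j_m$ produced by Theorem~\ref{prop_2}. Your proposed limiting argument ($\a\to 0^+,1^-$) runs into exactly the obstacle you name, the lack of $\ell^1$-compactness of $S$, and there is no evident substitute.

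One technical slip worth flagging: your pigeonhole step asserts that uncountably many $\xb_\a$ must share a common \emph{finite} support, but in the infinite-dimensional setting $supp(\xb_\a)$ can be infinite, and the family of all subsets of $\bn$ is uncountable, so the pigeonhole does not go through as stated. This is not fatal to the outline (one only needs a single pair $i,i'$ recurring in uncountably many $A_\a$, which a refined argument might still deliver), but it would need repair before the subsequent steps even become relevant. In any case, the essential gap you identify---pinning the support to $\{j_n,j_m\}$---remains, and matches the paper's own assessment that the statement is open.
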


\section{Quadratic Stochastic operators on $L^1$ and associated DQSO}

In this section, we consider QSO on $L^1$ and construct associated DQSO.
Let $(X,\cf,\lambda)$ be a measurable space with a $\sigma$-finite measure $\lambda$. By
$L^1(X,\cf,\lambda)$ we define an usual $L^1$ space. We notice that $L^1$ can be identified
with the set of all measures (signed ones) absolutely continuous w.r.t. $\lambda$. Namely,
for every non negative $f\in L^1$ we can define a measure $\mu_f$ by
$$
\mu_f(A)=\int_A fd\lambda,\ \ \ \ \ \forall A\in\Omega.
$$
Therefore, in what follows, we may identify measures with functions and visa versa.

By $S(X)$ we denote the set of all probability measures on $X$ which are absolutely continuous w.r.t. $\lambda$.

Recall that a collection of measurable sets $\mathcal B=\{B_k\}_{k\geq1}$ is called {\it partition of $X$} (w.r.t. $\lambda$) if it satisfies
\begin{enumerate}
  \item[(1)] $X=\bigcup_{k\geq1}B_k$;
  \item[(2)]  $B_i\cap B_j=\emptyset$ for all $i\neq j$;
  \item[(3)] $0<\lambda(B_k)<\infty$ for every $k\geq1$.
\end{enumerate}

We denote by $\mathcal{P}(X)$ the set of all partitions of $X$. Since $\lambda$ is $\sigma$-finite, we infer that $\mathcal{P}(X)\neq\emptyset$.

Let us take $\mathcal B=\{B_k\}_{k\geq1}\in \mathcal{P}(X)$.
For any $\xb\in\ell^1$ we define a measure $\mu_\xb^\mathcal{B}$ on $\cf$ as follows:
\begin{equation}\label{mu_x^B}
\mu_\xb^\mathcal{B}(A)=\sum_{k=1}^\infty\frac{x_k}{\lambda(B_k)}\lambda(A\cap B_k),\ \ \ \ \ \ \forall A\in\cf.
\end{equation}

We notice that $\mu_\xb^{\mathcal B}$ is not probability measure when $\|\xb\|_{\ell^1}\neq1$.
A natural question arises: what kind of $\xb\in\ell^1$ is it true $\mu_\xb^{\mathcal B}\in S(X)$?

Recall that $S=\{\xb\in\ell^1: x_i\geq0,\ \forall i\geq1\ \mbox{and } \parallel\xb\parallel_{\ell^1}=1\}$.
Then the following result holds.

\begin{lem}\label{lemmu_x} Let $\mu_\xb^{\mathcal B}$ be a measure given by \eqref{mu_x^B}. Then $\mu_\xb^{\mathcal B}\in S(X)$
iff $\xb\in S$.
\end{lem}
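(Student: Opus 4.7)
The plan is to verify the two defining properties of membership in $S(X)$ one by one, since absolute continuity is essentially built into the definition \eqref{mu_x^B} and only the non-negativity and normalization of $\mu_\xb^{\mathcal B}$ carry nontrivial content.

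First I would observe that, regardless of the sign of the entries of $\xb$, the formula \eqref{mu_x^B} defines an absolutely continuous (signed) measure with respect to $\lambda$: if $\lambda(A)=0$, then $\lambda(A\cap B_k)=0$ for every $k$, hence $\mu_\xb^{\mathcal B}(A)=0$. The $\sigma$-additivity of $\mu_\xb^{\mathcal B}$ as a set function reduces to interchanging two series, which is justified by the monotone convergence theorem once non-negativity of the summands has been ensured (i.e.\ once $x_k\ge 0$ is in force for the sufficiency direction). So after this preliminary remark, the entire statement reduces to checking non-negativity and total mass.

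For the direction $\xb\in S\Rightarrow \mu_\xb^{\mathcal B}\in S(X)$, non-negativity is immediate from $x_k\ge 0$ together with $\lambda(A\cap B_k)\ge 0$. For the total mass, since $\{B_k\}$ is a partition of $X$ by the properties (1)--(3) of $\mathcal{P}(X)$, a direct computation gives
$$
\mu_\xb^{\mathcal B}(X)=\sum_{k=1}^{\infty}\frac{x_k}{\lambda(B_k)}\lambda(B_k)=\sum_{k=1}^{\infty}x_k=\norm{\xb}_{\ell^1}=1.
$$
Combined with the absolute continuity above, this yields $\mu_\xb^{\mathcal B}\in S(X)$.

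For the converse, suppose $\mu_\xb^{\mathcal B}\in S(X)$. Testing \eqref{mu_x^B} on $A=B_n$ and using the disjointness of the partition together with $0<\lambda(B_n)<\infty$ gives
$$
\mu_\xb^{\mathcal B}(B_n)=\frac{x_n}{\lambda(B_n)}\lambda(B_n)=x_n,
$$
so $x_n\ge 0$ follows from $\mu_\xb^{\mathcal B}\ge 0$. Testing on $A=X$ yields $\norm{\xb}_{\ell^1}=\sum_k x_k=\mu_\xb^{\mathcal B}(X)=1$, whence $\xb\in S$. The only mildly delicate point in the whole argument is the justification of $\sigma$-additivity of $\mu_\xb^{\mathcal B}$ (as opposed to mere finite additivity) on $\cf$, but this is a standard application of monotone convergence once $x_k\ge 0$ is known, and does not obstruct either implication.
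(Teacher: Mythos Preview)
Your proof is correct and follows essentially the same route as the paper's own argument: both directions hinge on evaluating $\mu_\xb^{\mathcal B}$ on the partition blocks $B_n$ (to recover $x_n$) and on the whole space $X$ (to match the $\ell^1$-norm with the total mass). The only difference is that you are more explicit about the technical scaffolding---absolute continuity and $\sigma$-additivity---which the paper simply takes for granted.
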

\begin{proof}
Let us assume that $\mu_\xb^{\mathcal B}\in S(X)$. We have
$\mu_\xb^{\mathcal B}(B_k)=x_k$ for every $k\in\mathbb N$. It yields that $0\leq x_k\leq1$ for every $k\in\mathbb N$.
On the other hand, we obtain
$$
1=\mu_\xb^{\mathcal B}(X)=\mu_\xb^{\mathcal B}(\bigcup_{k\geq1}B_k)=\sum_{k\geq1}x_k,
$$
which implies that $\xb\in S$.

Now we suppose that $\xb\in S$. Then we get $\mu_\xb^{\mathcal B}(X)=\sum_{k\geq1}x_k=1$. This means that $\mu_\xb^{\mathcal B}$ is a probability
measure on $X$. Moreover, it is obvious that measure given by \eqref{mu_x^B} is absolutely continuous w.r.t. $\lambda$. Hence, we infer that
$\mu_\xb\in S(X)$.
\end{proof}

\begin{rem}\label{rem_ss}
For a given partition $\mathcal B$ of $X$, thanks to Lemma \ref{lemmu_x} there exists a one-to-one correspondence between $S$ and $M(X,\mathcal B):=\{\mu_\xb^{\mathcal B}\in S(X):\ \xb\in\ell^1\}$.
In other words, every $\mu\in M(X,\mathcal B)$ is uniquely defined by the values $\mu(B_k)$, $k\geq1$.
\end{rem}

\begin{prop} Let $\mathcal B\in \mathcal{P}(X)$.
Then $M(X,\mathcal B)$ is a convex and closed set w.r.t. strong convergence.
Moreover, $M(X,\mathcal B)$ is not compact w.r.t. weak convergence.
\end{prop}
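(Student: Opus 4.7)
The plan has three parts corresponding to the three assertions.

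For convexity, I would use the explicit formula \eqref{mu_x^B}. For $\xb,\yb\in S$ and $\alpha\in[0,1]$, linearity of $\xb\mapsto\mu_\xb^{\mathcal B}$ gives
\[
\alpha\mu_\xb^{\mathcal B}+(1-\alpha)\mu_\yb^{\mathcal B}=\mu_{\alpha\xb+(1-\alpha)\yb}^{\mathcal B},
\]
and since $S$ is convex, Lemma \ref{lemmu_x} places this combination back in $M(X,\mathcal B)$.

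For closedness under strong (i.e.\ $L^1$-norm, equivalently total variation) convergence, the key observation is that the map $\xb\mapsto\mu_\xb^{\mathcal B}$ is a linear isometry from $(\ell^1,\|\cdot\|_{\ell^1})$ into $L^1(X,\cf,\lambda)$: the density of $\mu_\xb^{\mathcal B}$ is $f_\xb=\sum_k\frac{x_k}{\lambda(B_k)}\mathbf 1_{B_k}$, and a direct integration yields $\|f_\xb-f_\yb\|_{L^1}=\sum_k|x_k-y_k|=\|\xb-\yb\|_{\ell^1}$. Hence if $\mu_{\xb^{(n)}}^{\mathcal B}\to\mu$ strongly, the $\xb^{(n)}$ are Cauchy in $\ell^1$ and converge to some $\xb$; closedness of $S$ in $\ell^1$ gives $\xb\in S$, and the limit must be $\mu_\xb^{\mathcal B}\in M(X,\mathcal B)$ by Lemma \ref{lemmu_x}.

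For the failure of weak compactness, I would exhibit a sequence with no weakly convergent subsequence. Take $\mu_n:=\mu_{\eb_n}^{\mathcal B}$, whose density is $f_n=\lambda(B_n)^{-1}\mathbf 1_{B_n}$. If some subsequence $(f_{n_k})$ converged weakly in $L^1$ to an $f$, then testing against $\mathbf 1_{B_j}\in L^\infty$ for each fixed $j$ would yield
\[
\int_{B_j}f\,d\lambda=\lim_{k\to\infty}\mu_{n_k}(B_j)=0,
\]
so $f=0$ a.e.\ on every $B_j$, hence a.e.\ on $X=\bigcup_j B_j$. But testing against $\mathbf 1_X\in L^\infty$ gives $\int_X f\,d\lambda=\lim_k\mu_{n_k}(X)=1$, a contradiction. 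The Eberlein--\v{S}mulian theorem in the Banach space $L^1$ then rules out weak compactness.

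The proof is largely formal once the identifications are fixed (measure $\leftrightarrow$ density, strong $=$ $L^1$-norm, weak $=$ tested against $L^\infty$). The main conceptual step is recognising the isometric embedding $S\hookrightarrow L^1$, which reduces the first two assertions to standard facts about $\ell^1$; the third then follows from the characteristic \emph{escape of mass} exhibited by the canonical sequence $\mu_{\eb_n}^{\mathcal B}$, which is where I expect any subtlety to sit --- namely, checking that the test functions $\mathbf 1_{B_j}$ and $\mathbf 1_X$ are legitimate elements of $L^\infty$ regardless of whether $\lambda(X)$ is finite or infinite (which they are, being bounded).
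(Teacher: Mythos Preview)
Your proof is correct and rests on the same idea as the paper's: both reduce the three assertions to standard facts about $S\subset\ell^1$ via the map $\xb\mapsto\mu_\xb^{\mathcal B}$. The paper argues that this map is a bijection and a homeomorphism and then transfers convexity, closedness, and non-compactness of $S$ in one stroke; you instead make the isometry $\|f_\xb-f_\yb\|_{L^1}=\|\xb-\yb\|_{\ell^1}$ explicit (which is cleaner for closedness) and give a direct Eberlein--\v{S}mulian argument with the concrete sequence $\mu_{\eb_n}^{\mathcal B}$ for non-compactness, where the paper simply cites non-compactness of $S$ in $\ell^1$. Your route is slightly more explicit and arguably more careful about the distinction between strong and weak topologies.

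One small slip in the non-compactness step: from $\int_{B_j}f\,d\lambda=0$ alone you cannot conclude $f=0$ a.e.\ on $B_j$, since a weak $L^1$-limit need not be nonnegative. Either test against $\mathbf 1_A$ for every measurable $A\subset B_j$ (this also gives limit $0$, because $f_{n_k}$ is supported on $B_{n_k}$ and eventually $n_k\neq j$), or bypass the pointwise claim and get the contradiction directly from
\[
1=\int_X f\,d\lambda=\sum_{j\geq1}\int_{B_j}f\,d\lambda=0,
\]
the middle equality holding by dominated convergence since $f\in L^1$.
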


\begin{proof}
One can see that $T: S\to M(X,\mathcal B)$ given by $T\xb=\mu_{\xb}^{\mathcal B}$ is a bijection.
Then any sequence on $M(X,\mathcal B)$ is defined by a sequence $\{\xb^{(n)}\}_{n\geq1}\subset S$.
It is obvious that if $\xb^{(n)}\stackrel{\norm{\cdot}_{\ell_1}}{\longrightarrow}\xb$ then
$\lim\limits_{n\to\infty}\mu_{\xb^{(n)}}^{\mathcal B}(A)=\mu_{\xb}^{\mathcal B}(A)$
for all $A\in\cf$.

Let us pick
a sequence $\{\mu_{\xb^{(n)}}^{\mathcal B}\}_{n\geq1}\subset M(X,\mathcal B)$. Assume that $\mu(A)=\lim\limits_{n\to\infty}\mu_{\xb^{(n)}}^{\mathcal B}(A)$
for every $A\in\cf$. Then we have $\mu\in S(X)$.
On the other hand, we obtain
$$
\mu(B_k)=\lim_{n\to\infty}x_k^{(n)},\ \ \ \ \ \forall k\geq1.
$$
The last one together with $\mu(X)=1$ implies that $\xb^{(n)}$ converges on $S$ w.r.t. $\ell^1$-norm. Hence,
we conclude that $T$ is a homeomorphism. Then due to closedness and convexity of $S$ we infer that $M(X,\mathcal B)$ has the same topological
properties. We notice that $S$ is not compact w.r.t. $\ell^1$-norm. Consequently, $M(X,\mathcal B)$ is not a compact w.r.t. weak convergence.
\end{proof}

\begin{lem}\label{chrylem}
Let $\tilde S(X)=\left\{\mu_f\in S(X): f\ \mbox{is a simple function on}\ L^1\right\}$. Then
\begin{equation}\label{sample=mu}
\tilde{S}(X)=\bigcup_{\mathcal B\in \mathcal{P}(X)}M(X,\mathcal B).
\end{equation}
\end{lem}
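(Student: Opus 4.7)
The goal is to prove the two set equalities, and the plan is to establish the two inclusions separately, each one amounting to identifying the Radon--Nikodym density of a measure with a (countably valued) simple function in $L^1$.

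For the inclusion $\bigcup_{\mathcal B \in \mathcal{P}(X)} M(X, \mathcal B) \subseteq \tilde S(X)$, I would fix any partition $\mathcal B = \{B_k\}_{k \geq 1} \in \mathcal{P}(X)$ and any $\mu_\xb^{\mathcal B} \in M(X,\mathcal B)$. Reading off formula \eqref{mu_x^B}, this measure is absolutely continuous with respect to $\lambda$ with density $f = \sum_{k\geq 1} \frac{x_k}{\lambda(B_k)}\chi_{B_k}$. Since $\{B_k\}$ is a measurable partition and each coefficient is a nonnegative constant, $f$ is a simple function; by Lemma \ref{lemmu_x}, $\int_X f\, d\lambda = \sum_k x_k = 1$, so $f \in L^1$ and therefore $\mu_\xb^{\mathcal B} = \mu_f \in \tilde S(X)$.

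For the reverse inclusion, I would take $\mu_f \in \tilde S(X)$ with $f = \sum_{i=1}^N c_i \chi_{A_i}$, where the $A_i$ are disjoint measurable sets of positive finite $\lambda$-measure and $c_i \geq 0$. The task is to construct a partition $\mathcal B \in \mathcal{P}(X)$ such that $\mu_f = \mu_\xb^{\mathcal B}$ for some $\xb \in S$. My plan is to subdivide each $A_i$ (and also $X \setminus \bigcup_i A_i$) into countably many measurable pieces of positive finite $\lambda$-measure using the $\sigma$-finiteness of $\lambda$, reindex these pieces as $\{B_k\}_{k\geq 1}$, and set $x_k = c_i\, \lambda(B_k)$ whenever $B_k \subseteq A_i$ and $x_k = 0$ for $B_k \subseteq X \setminus \bigcup_i A_i$. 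A direct verification using \eqref{mu_x^B} then shows $\mu_\xb^{\mathcal B}(A) = \int_A f\, d\lambda = \mu_f(A)$ for every $A\in\cf$, and $\sum_k x_k = \int_X f\, d\lambda = 1$ gives $\xb \in S$, so $\mu_f \in M(X, \mathcal B)$.

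The hard part is the subdivision step in the second inclusion: ensuring the existence of a countably infinite refinement of each $A_i$ into pieces of strictly positive finite measure, as required by condition (3) of the partition definition. This should reduce to the $\sigma$-finiteness of $\lambda$, but care is needed when $\lambda$ has atoms or when $X \setminus \bigcup_i A_i$ is $\lambda$-null; in such cases the equality is naturally interpreted modulo $\lambda$-null sets, which is harmless since all objects in $\tilde S(X)$ and in $M(X, \mathcal B)$ are absolutely continuous with respect to $\lambda$.
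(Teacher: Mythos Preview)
Your proposal is correct and follows essentially the same two-inclusion strategy as the paper: the density $\sum_k \frac{x_k}{\lambda(B_k)}\chi_{B_k}$ for one direction, and a partition built from the pieces of the simple function for the other. The only cosmetic difference in the reverse inclusion is that the paper keeps the finite-measure level sets $A_i=\{f=y_i\}$ intact and subdivides only the (necessarily unique) infinite-measure level set $\{f=0\}$ by intersecting it with an arbitrary fixed partition of $X$, whereas you propose subdividing everything via $\sigma$-finiteness; your explicit attention to the edge cases (atoms, null complement, modulo-null identification) is in fact more careful than the paper's own treatment.
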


\begin{proof}
It is clear that
$\bigcup\limits_{\mathcal{B}\in \mathcal{P}(X)}M(X,\mathcal{B})\subset\tilde S(X)$. Indeed, for
any $\mu_\xb^{\mathcal B}$ we define a simple function
$$
f_{\mu_\xb^{\mathcal B}}(u)=\frac{x_k}{\lambda(B_k)},\ \ \ \ \ \forall u\in B_k,\ \forall k\geq1,
$$
which satisfies
$$
\mu_\xb^{\mathcal B}(A)=\int_A f_{\mu_\xb^{\mathcal B}}d\lambda,\ \ \ \ \ \forall A\in\cf.
$$

Now, we take an arbitrary $\mu_f\in\tilde S(X)$. Then for any $i\geq1$ we have a measurable set $A_i=\left\{u\in X: f(u)=y_i\right\}$.
One may assume that $\lambda(A_i)>0$ for every $i\geq1$. We notice that if $\lambda(A_i)<\infty$ for each $i\in\mathbb N$ then
$\mathcal A=\{A_i\}_{i\geq1}$ is a partition of $X$ and $\mu_f=\mu_\xb^{\mathcal A}$, where $\xb=(\mu_f(A_1),\mu_f(A_2),\dots)\in S$.

If $\lambda(A_j)=\infty$ for some $j\geq1$ then one has $y_j=0$ (otherwise $f$ is not integrable). Hence, $\mu_f(A_j)=0$. So, without loss of generality we may assume that
$y_1=0$, $\lambda(A_1)=\infty$ and $y_i>0$, $\lambda(A_i)<\infty$ for any $i>1$. Pick any partition $\{B_k\}_{k\geq1}$ of $X$ and define
a new partition $\tilde{\mathcal B}$ of $X$ as follows:
$$
\tilde{B}_k=\left\{
\begin{array}{ll}
A_1\cap B_{\frac{k+1}{2}}, & \mbox{if } k\ \mbox{is odd},\\
A_{\frac{k+2}{2}}, & \mbox{if } k\ \mbox{is even}.
\end{array}
\right.
$$
Then $\mu_f=\mu_{\tilde{\yb}}^{\tilde{\mathcal B}}$, where coordinates of $\tilde{\yb}\in S$ are given by
$$
\tilde{y}_k=\left\{
\begin{array}{ll}
0, & \mbox{if } k\ \mbox{is odd},\\
\mu_f(A_{\frac{k+2}{2}}), & \mbox{if } k\ \mbox{is even}.
\end{array}
\right.
$$
The arbitrariness of $\mu_f$ yields that $\tilde{S}(X)\subset\bigcup_{\mathcal{B}\in\mathcal{P}(X)}M(X,\mathcal B)$.
The last one together with $\bigcup_{\mathcal{B}\in\mathcal{P}(X)}M(X,\mathcal B)\subset\tilde S(X)$
implies \eqref{sample=mu}.
\end{proof}

Due to the density argument, from Lemma
\ref{chrylem} we immediately infer the following result.

\begin{prop}\label{prop_isbot}
One has $S(X)=\overline{\bigcup_{\mathcal{B}\in\mathcal P(X)}M(X,\mathcal B)}$, here
the closure in sense of weak convergence.
\end{prop}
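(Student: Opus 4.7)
The plan is to invoke Lemma \ref{chrylem}, which already identifies $\bigcup_{\mathcal B\in\mathcal P(X)}M(X,\mathcal B)$ with the set $\tilde S(X)$ of probability measures whose density (w.r.t.\ $\lambda$) is a simple function. Once this is in hand, the whole statement reduces to the classical density of nonnegative simple functions among the nonnegative unit-norm elements of $L^1(X,\cf,\lambda)$.

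First, the inclusion $\bigcup_{\mathcal B\in\mathcal P(X)}M(X,\mathcal B)\subset S(X)$ comes for free from Lemma \ref{lemmu_x}. For the opposite inclusion, I would fix an arbitrary $\mu_f\in S(X)$ with density $f\geq 0$ satisfying $\int_X f\,d\lambda=1$, and approximate $f$ by nonnegative simple functions. By the standard layer-cake construction one chooses $s_n\nearrow f$ pointwise $\lambda$-a.e.; by monotone convergence $\|s_n-f\|_{L^1}\to 0$ and $\int_X s_n\,d\lambda\to 1$, so the normalizations $\tilde s_n:=s_n/\int_X s_n\,d\lambda$ (defined for large $n$) still satisfy $\tilde s_n\to f$ in $L^1$ and $\int_X\tilde s_n\,d\lambda=1$. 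Consequently each $\mu_{\tilde s_n}$ belongs to $\tilde S(X)$, which by Lemma \ref{chrylem} coincides with $\bigcup_{\mathcal B\in\mathcal P(X)}M(X,\mathcal B)$.

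To finish, one only has to upgrade $L^1$-convergence of densities to the required convergence of measures: from
\[
\sup_{A\in\cf}\bigl|\mu_{\tilde s_n}(A)-\mu_f(A)\bigr|\;\leq\;\int_X|\tilde s_n-f|\,d\lambda\;\longrightarrow\;0,
\]
one obtains $\mu_{\tilde s_n}\to\mu_f$ in total variation norm, hence setwise and in particular weakly. This yields $\mu_f\in\overline{\bigcup_{\mathcal B\in\mathcal P(X)}M(X,\mathcal B)}$, completing the nontrivial direction.

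I do not expect any serious obstacle: the delicate measure-theoretic bookkeeping (for instance handling a level set of infinite $\lambda$-measure on which $f$ vanishes) has already been absorbed into Lemma \ref{chrylem}. The only point that deserves a little care is ensuring that the weak closure does not escape $S(X)$, but this is guaranteed by Vitali--Hahn--Saks: a setwise limit of probability measures absolutely continuous w.r.t.\ $\lambda$ remains absolutely continuous w.r.t.\ $\lambda$, so the equality, and not merely an inclusion, is legitimate.
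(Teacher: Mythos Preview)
Your argument is correct and is exactly the ``density argument'' that the paper invokes; the paper gives no further detail beyond citing Lemma~\ref{chrylem} and the phrase ``density argument,'' so you have simply filled in what the paper leaves implicit. Your extra care with the normalization $\tilde s_n$ and the Vitali--Hahn--Saks step for the reverse inclusion are welcome elaborations of points the paper does not address at all.
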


\subsection{Projective surjectivity of QSO}

Now, we consider a measurable function $P: X\times X\times\cf\to[0; 1]$ which satisfies the following conditions:
\begin{equation}\label{P(x,y,A)}
P(u,v,A)=P(v,u,A),\ \ \ \ \ \forall u,v\in X,\ \ \forall A\in\cf,
\end{equation}
\begin{equation}\label{P(x,y)}
P(u,v,\cdot)\in S(X),\ \ \ \ \ \forall u,v\in X.
\end{equation}
This function is called {\it transition kernel}, and defines a \textit{Quadratic Stochastic Operator} (in short QSO) by
\begin{equation}\label{Vasos}
(\mathcal V\mu)(A)=\int_X\int_X P(u,v,A)d\mu(u)d\mu(v),\ \ \ \ \forall \mu\in S(X),\ \forall A\in\cf.
\end{equation}
Clearly, $\mathcal V$ maps $S(X)$ into itself.

\begin{defn}
A QSO $\mathcal V$ given by \eqref{Vasos} is called {\it projective surjection} if it is surjective on $M(X, \mathcal B)$
for some $\mathcal B\in\mathcal P(X)$.
\end{defn}

Now we are going to find projective surjection QSOs.
For a given QSO $\mathcal V$ we associate DQSO (this DQSO depends on partition $\{B_k\}_{k\geq1}$) $V_\mathcal{B}: S\to S$
by
\begin{equation}\label{V_mathcalb}
\left(V_\mathcal{B}(\xb)\right)_k=\sum_{i,j\geq1}P_{ij,k}^\mathcal{B}x_ix_j,\ \ \ \ \ \ \ \forall k\geq1,
\end{equation}
where
\begin{equation}\label{P_ijkasos}
P_{ij,k}^\mathcal{B}=\frac{1}{\lambda(B_i)\lambda(B_j)}\int_{B_i}\int_{B_j}P(u,v,B_k)d\lambda(u)d\lambda(v),\ \ \ \ \ \ \ \forall k\geq1.
\end{equation}

\begin{lem}\label{lemV=V_B}
Let $\mathcal B=\{B_k\}_{k\geq1}\in\mathcal P(X)$. Then for every $\xb\in S$ it holds
$$
(\mathcal V\mu_{\xb}^{\mathcal B})(B_k)=(V_{\mathcal B}(\xb))_k,\ \ \ \ \ \forall k\geq1.
$$
\end{lem}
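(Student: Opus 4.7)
The plan is to unpack both sides of the claimed identity using the definitions and to verify that they agree after an interchange of summation and integration, which is valid since everything in sight is non-negative.

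First I would rewrite the measure $\mu_\xb^{\mathcal B}$ on the left-hand side as an absolutely continuous measure whose Radon--Nikodym density with respect to $\lambda$ is the (non-negative, possibly infinite-rank) simple function
\begin{equation*}
f_\xb^{\mathcal B}(u)=\sum_{i\geq 1}\frac{x_i}{\lambda(B_i)}\mathbf{1}_{B_i}(u).
\end{equation*}
This is essentially the simple function constructed in the proof of Lemma~\ref{chrylem}. Plugging this density into the definition \eqref{Vasos} of $\mathcal V$ with $A=B_k$ gives a double integral whose integrand is the product
\begin{equation*}
P(u,v,B_k)\,f_\xb^{\mathcal B}(u)\,f_\xb^{\mathcal B}(v)=P(u,v,B_k)\sum_{i\geq 1}\frac{x_i}{\lambda(B_i)}\mathbf{1}_{B_i}(u)\sum_{j\geq 1}\frac{x_j}{\lambda(B_j)}\mathbf{1}_{B_j}(v).
\end{equation*}

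Next, since $P(u,v,B_k)\geq 0$ and all coefficients $x_i/\lambda(B_i)$ are non-negative, Tonelli's theorem lets me interchange the two series with the double integral over $X\times X$. Splitting according to the partition $\mathcal B$, the double integral becomes
\begin{equation*}
\sum_{i,j\geq 1}\frac{x_ix_j}{\lambda(B_i)\lambda(B_j)}\int_{B_i}\!\!\int_{B_j}P(u,v,B_k)\,d\lambda(u)\,d\lambda(v),
\end{equation*}
and by the very definition \eqref{P_ijkasos} of $P_{ij,k}^{\mathcal B}$ this equals $\sum_{i,j\geq 1}P_{ij,k}^{\mathcal B}x_ix_j=(V_{\mathcal B}(\xb))_k$, matching the definition \eqref{V_mathcalb}. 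This is exactly the right-hand side of the claimed equality.

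The only delicate point, which I regard as the main (minor) obstacle, is to justify rigorously the interchange of the two infinite sums with the double integral. Tonelli handles this cleanly because each term is non-negative and the total is finite: indeed, summing the displayed identity over $k$ yields $(\mathcal V\mu_\xb^{\mathcal B})(X)=1$, so the sum is absolutely convergent. One could alternatively first establish the identity for $\xb$ with finite support, where everything is a finite sum and the computation is entirely algebraic, and then pass to the limit using monotone convergence along partial sums, since truncating $\xb$ produces measures increasing pointwise on $\cf$. Either route gives the desired equality.
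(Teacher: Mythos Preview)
Your proof is correct and follows essentially the same route as the paper: unpack $(\mathcal V\mu_\xb^{\mathcal B})(B_k)$ via the definition \eqref{Vasos}, split the double integral over $X\times X$ into a sum over $B_i\times B_j$ using the density of $\mu_\xb^{\mathcal B}$ with respect to $\lambda$, and identify the resulting coefficients with $P_{ij,k}^{\mathcal B}$. The only difference is that you make the Tonelli justification for the interchange of sums and integrals explicit, whereas the paper simply writes the chain of equalities without comment.
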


\begin{proof}
Let $\mathcal B=\{B_k\}_{k\geq1}$ be a partition of $X$ and $\xb\in S$. Then for any $k\geq1$ we have
\begin{eqnarray*}
(\mathcal V\mu_\xb^{\mathcal B})(B_k)&=&\int_{X}\int_{X}P(u,v,B_k)d\mu_\xb^{\mathcal B}(u)d\mu_\xb^{\mathcal B}(v)\\
&=&\sum_{i,j\geq1}\frac{x_ix_j}{\lambda(B_i)\lambda(B_j)}\int_{B_i}\int_{B_j}P(u,v,B_k)d\lambda(u)d\lambda(v)\\
&=&\sum_{i,j\geq1}P_{ij,k}^\mathcal{B}x_ix_j\\
&=&(V_{\mathcal B}(\xb))_k.
\end{eqnarray*}
\end{proof}

\begin{prop}\label{prop_ss}
Let $\mathcal V$ be a QSO given by \eqref{Vasos} and $\mathcal B\in\mathcal P(X)$. If $P(u,v,\cdot)\in M(X,\mathcal B)$ for every $(u,v)\in X^2$
then $\mathcal V(M(X,\mathcal B))\subset M(X,\mathcal B)$.
\end{prop}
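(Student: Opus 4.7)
The plan is to show that for every $\mu\in M(X,\mathcal B)$, the image $\mathcal V\mu$ has a density with respect to $\lambda$ that is constant on each $B_k$, which is precisely the defining feature of membership in $M(X,\mathcal B)$ (cf. Remark \ref{rem_ss} and the explicit formula \eqref{mu_x^B}). Concretely, if I can prove
\[
(\mathcal V\mu)(A)=\sum_{k\geq 1}\frac{(\mathcal V\mu)(B_k)}{\lambda(B_k)}\,\lambda(A\cap B_k),\qquad\forall A\in\cf,
\]
then by the definition of $\mu_\yb^{\mathcal B}$ with $y_k=(\mathcal V\mu)(B_k)$, and by Lemma \ref{lemV=V_B} which identifies $y_k=(V_{\mathcal B}(\xb))_k$, the measure $\mathcal V\mu$ coincides with $\mu_\yb^{\mathcal B}\in M(X,\mathcal B)$.

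First I would unpack the hypothesis $P(u,v,\cdot)\in M(X,\mathcal B)$. By \eqref{mu_x^B} applied to the probability measure $P(u,v,\cdot)$ (together with Lemma \ref{lemmu_x}), for every $(u,v)\in X^2$ and every $A\in\cf$ one has
\[
P(u,v,A)=\sum_{k\geq 1}\frac{P(u,v,B_k)}{\lambda(B_k)}\,\lambda(A\cap B_k),
\]
since the coordinates of the associated vector in $S$ are exactly $P(u,v,B_k)$.

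Next, I would substitute this representation into the definition \eqref{Vasos} of $\mathcal V\mu$ and interchange integration with the sum. Since the integrand $P(u,v,A)$ and each summand is non-negative, Tonelli's theorem justifies the interchange, giving
\[
(\mathcal V\mu)(A)=\sum_{k\geq 1}\frac{\lambda(A\cap B_k)}{\lambda(B_k)}\int_X\!\int_X P(u,v,B_k)\,d\mu(u)\,d\mu(v)=\sum_{k\geq 1}\frac{(\mathcal V\mu)(B_k)}{\lambda(B_k)}\,\lambda(A\cap B_k).
\]
This is exactly the expansion \eqref{mu_x^B} with coefficient vector $\yb=\bigl((\mathcal V\mu)(B_k)\bigr)_{k\geq 1}$. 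Since $\mathcal V\mu\in S(X)$ is a probability measure, $\yb\in S$, so $\mathcal V\mu=\mu_\yb^{\mathcal B}\in M(X,\mathcal B)$.

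The only real step requiring care is the Fubini/Tonelli interchange of the sum (over $k$) with the double integral (against $\mu\otimes\mu$), but this is immediate from non-negativity of $P(u,v,B_k)$ and $\lambda(A\cap B_k)$. Everything else is essentially a bookkeeping check, and Lemma \ref{lemV=V_B} conveniently identifies the resulting vector $\yb$ with $V_{\mathcal B}(\xb)$, giving a cleaner formulation of the induced action on coordinates.
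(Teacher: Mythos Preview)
Your proof is correct and uses the same core identity as the paper, namely that $P(u,v,\cdot)\in M(X,\mathcal B)$ forces $P(u,v,A)=\sum_k \frac{\lambda(A\cap B_k)}{\lambda(B_k)}P(u,v,B_k)$. The paper proceeds slightly differently: it first expands the integration domain as $\bigcup_{i,j}B_i\times B_j$ using the explicit density of $\mu_\xb^{\mathcal B}$, then applies the identity on each block. Your route is more streamlined---you substitute the identity for $P$ directly and apply Tonelli, never using the structure of $\mu$ itself. As a consequence your argument actually establishes the stronger statement $\mathcal V(S(X))\subset M(X,\mathcal B)$ under the hypothesis on $P$, whereas the paper's computation is tied to inputs of the form $\mu_\xb^{\mathcal B}$. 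Both are fine for the stated proposition; yours simply does a little less work.
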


\begin{proof} Let $\mathcal B=\{B_k\}_{k\geq1}$ be partition of $X$. Assume that
$P(u,v,\cdot)\in M(X,\mathcal B)$ for every $(u,v)\in X^2$.
Then for arbitrary $(u,v)\in X^2$ we obtain
\begin{equation}\label{kkeq}
P(u,v,A_k)=\frac{\lambda(A_k)}{\lambda(B_k)}P(u,v,B_k),\ \ \ \ \forall A_k\subset B_k,\ \forall k\geq1.
\end{equation}
For any $\xb\in S$ we define $\yb\in S$ as follows $y_k=(V_\mathcal{B}(\xb))_k$, $k\geq1$. The due to Lemma \ref{lemV=V_B}
we get
$$
(\mathcal V\mu_{\xb}^{\mathcal B})(B_k)=y_k,\ \ \ \ \ \forall k\geq1.
$$
Let us establish that $\mathcal V\mu_\xb^\mathcal{B}=\mu_\yb^\mathcal{B}$. Take
an arbitrary measurable $A\in\cf$ and denote $A_k=A\cap B_k$ for every $k\geq1$.
Keeping in mind \eqref{kkeq} one gets
\begin{eqnarray*}
(\mathcal V\mu_\xb^{\mathcal B})(A)&=&\int_{X}\int_{X}P(u,v,A)
d\mu_\xb^{\mathcal B}(u)d\mu_\xb^{\mathcal B}(v)\\
&=&\sum_{i,j\geq1}\frac{x_ix_j}{\lambda(B_i)\lambda(B_j)}\sum_{k\geq1}
\int_{B_i}\int_{B_j}P(u,v,A_k)d\lambda(u)d\lambda(v)\\
&=&\sum_{i,j\geq1}\frac{x_ix_j}{\lambda(B_i)\lambda(B_j)}\sum_{k\geq1}
\frac{\lambda(A_k)}{\lambda(B_k)}\int_{B_i}\int_{B_j}P(u,v,B_k)d\lambda(u)d\lambda(v)\\
&=&\sum_{k\geq1}\frac{\lambda(A_k)}{\lambda(B_k)}y_k\\
&=&\mu_\yb^{\mathcal B}(A),
\end{eqnarray*}
which yields $\mathcal V\mu_\xb^\mathcal{B}=\mu_\yb^\mathcal{B}$. The arbitrariness of $\xb\in S$ implies $\mathcal V(M(X,\mathcal B))\subset M(X,\mathcal B)$.
The proof is complete.
\end{proof}

Now we are going to find sufficiently conditions of projective surjectivity of QSO given by \eqref{Vasos}.

\begin{thm}\label{thm8}
Let $(X, \cf, \lambda)$ be a measurable space with $\sigma$-finite measure $\lambda$
and $\mathcal B=\{B_k\}_{k\geq1}\in\mathcal P(X)$. Assume that the transition kernel $P$ of QSO $\mathcal V$ satisfies the followings conditions:
\begin{itemize}
\item[(i)] $P(u,v,\cdot)\in M(X,\mathcal B),\ \ \ \forall (u,v)\in X^2$;
\item[(ii)] there exists a sequence $\{j_n\}_{n\geq1}\subset\mathbb N$ such that
\begin{equation}\label{eqn_P(u,v,B_k)}
P(u,v,B_k)=\frac{\lambda(B_k\cap B_{n})}{2\lambda(B_{n})}+\frac{\lambda(B_k\cap B_{m})}{2\lambda(B_{m})},
\ \ \ \ \forall (u,v)\in B_{j_n}\times B_{j_m},\ \forall k\in\mathbb N.
\end{equation}
\end{itemize}
Then $\mathcal V$ is projective surjection.
\end{thm}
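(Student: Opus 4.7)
The plan is to reduce projective surjectivity of $\mathcal V$ to ordinary surjectivity of the associated DQSO $V_{\mathcal B}$ on $S$, and then to invoke Theorem \ref{thm_surject}. Hypothesis (i) together with Proposition \ref{prop_ss} guarantees that $\mathcal V$ leaves $M(X,\mathcal B)$ invariant. Lemma \ref{lemV=V_B} combined with Remark \ref{rem_ss} then says that under the bijection $T\xb=\mu_{\xb}^{\mathcal B}$ between $S$ and $M(X,\mathcal B)$, the restriction of $\mathcal V$ to $M(X,\mathcal B)$ is conjugate to $V_{\mathcal B}$; consequently $\mathcal V$ is projective surjective as soon as $V_{\mathcal B}$ is surjective on $S$.

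To establish the latter I would feed the sequence $\{j_n\}_{n\ge 1}$ provided by (ii) into the sufficient criterion \eqref{nec1}. Since $\{B_k\}_{k\ge 1}$ is a partition, one has the identity $\lambda(B_k\cap B_n)/\lambda(B_n)=\delta_{kn}$, so the right-hand side of \eqref{eqn_P(u,v,B_k)} collapses to
\[
P(u,v,B_k)=\tfrac12\delta_{kn}+\tfrac12\delta_{km},\qquad (u,v)\in B_{j_n}\times B_{j_m}.
\]
For $k\notin\{n,m\}$ this integrand vanishes identically on $B_{j_n}\times B_{j_m}$, and inserting it into the defining formula \eqref{P_ijkasos} of $P_{j_nj_m,k}^{\mathcal B}$ yields $P_{j_nj_m,k}^{\mathcal B}=0$. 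This is precisely condition \eqref{nec1}, so Theorem \ref{thm_surject} delivers surjectivity of $V_{\mathcal B}$ on $S$, and transporting back along $T$ completes the proof.

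I do not expect a genuine obstacle: the entire argument is a bookkeeping reduction showing that the ostensibly elaborate kernel prescription \eqref{eqn_P(u,v,B_k)} is simply the $L^1$-encoding of the Volterra-type sparsity condition \eqref{nec1} at the distinguished indices $\{j_n\}$. The only minor verifications are that the right-hand side of \eqref{eqn_P(u,v,B_k)} is symmetric in $(n,m)$, which is consistent with the symmetry \eqref{P(x,y,A)} of $P$, and that the conjugation by $T$ preserves surjectivity, which is immediate from $T$ being a bijection onto $M(X,\mathcal B)$.
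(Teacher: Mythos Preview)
Your proposal is correct and follows essentially the same route as the paper: use Proposition \ref{prop_ss} for invariance, simplify \eqref{eqn_P(u,v,B_k)} via $\lambda(B_k\cap B_n)/\lambda(B_n)=\delta_{kn}$ to obtain $P_{j_nj_m,k}^{\mathcal B}=0$ for $k\notin\{n,m\}$, invoke Theorem \ref{thm_surject} to get surjectivity of $V_{\mathcal B}$, and then transport back via Lemma \ref{lemV=V_B} and Remark \ref{rem_ss}. Your phrasing in terms of conjugation by the bijection $T$ is a clean way to package what the paper writes out explicitly.
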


\begin{proof}
Assume that all conditions of the theorem hold. From the condition (i), according to Proposition \ref{prop_ss}
we have $\mathcal V: M(X,\mathcal B)\to M(X,\mathcal B)$.

Now, let us show that $\mathcal V(M(X,\mathcal B))=M(X,\mathcal B)$. For any triple $(n,m,k)\in\mathbb N^3$
from \eqref{eqn_P(u,v,B_k)} after simple calculations, we get
$$
P_{j_nj_m,k}^{\mathcal B}=P_{j_mj_n,k}^{\mathcal B}=\left\{
\begin{array}{ll}
1, & \mbox{if }\ n=m=k;\\[1mm]
\frac{1}{2}, & \mbox{if }\ n=k\neq m;\\[1mm]
0, & \mbox{if }\ k\notin\{n,m\}.
\end{array}
\right.
$$

Then, due to Theorem \ref{thm_surject}, the corresponding DQSO $V_\mathcal{B}$ is a surjection. Hence, for any $\yb\in S$
we can find $\xb\in S$ such that $V_\mathcal{B}(\xb)=\yb$. Consequently,
by Lemma \ref{lemV=V_B} one has
$$
(\mathcal V\mu_\xb^{\mathcal B})(B_k)=\mu_\yb^{\mathcal B}(B_k),\ \ \ \ \ \forall k\in\mathbb N.
$$
From the last one, keeping in mind $\mathcal V\mu_\xb^{\mathcal B}\in M(X,\mathcal B)$ thanks to Remark \ref{rem_ss} we infer
$$
\mathcal V\mu_\xb^{\mathcal B}=\mu_\yb^{\mathcal B}.
$$
Finally, the arbitrariness of $\yb\in S$ yields that $\mathcal V(M(X,\mathcal B))=M(X,\mathcal B)$. This completes the proof.
\end{proof}

\begin{rem}
We notice that the conclusion of the last theorem will be true if \eqref{eqn_P(u,v,B_k)} holds almost everywhere in $B_{j_n}\times B_{j_m}$.
\end{rem}

For any measurable set $A\subset X$ we define
$$
\mathcal E_A=\left\{(x,y)\in (X\setminus A)^2:\ P(x,y,A)\neq0\right\}.
$$

\begin{thm}\label{Thm9}
Let $(X, \cf, \lambda)$ be a measurable space with $\sigma$-finite measure $\lambda$
and $\mathcal B=\{B_k\}_{k\geq1}\in\mathcal P(X)$. Then there is only one QSO $\mathcal V$ whose transition kernel satisfies the followings:
\begin{itemize}
\item[(i)] $P(u,v,\cdot)\in M(X,\mathcal B),\ \ \ \forall (u,v)\in X^2$;
\item[(ii)] $\lambda(\mathcal E_{B_k})=0$ for every $k\in\mathbb N$.
\end{itemize}
Moreover, $\mathcal V$ is projective surjection.
\end{thm}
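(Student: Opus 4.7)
The plan is to establish the theorem in three pieces: existence of a QSO satisfying~(i) and~(ii), projective surjectivity via Theorem~\ref{thm8}, and uniqueness. The first two pieces are quick and rely on an explicit construction; the last is the delicate one.

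For existence, I would define the kernel explicitly: for $u\in B_n$ and $v\in B_m$, set
\[
P(u,v,A)=\frac{1}{2}\,\frac{\lambda(A\cap B_n)}{\lambda(B_n)}+\frac{1}{2}\,\frac{\lambda(A\cap B_m)}{\lambda(B_m)},\qquad\forall A\in\cf.
\]
This $P$ is symmetric in $(u,v)$, satisfies $P(u,v,X)=1$, and has density $\frac{1}{2}\bigl(\mathbf{1}_{B_n}/\lambda(B_n)+\mathbf{1}_{B_m}/\lambda(B_m)\bigr)$ with respect to $\lambda$, so $P(u,v,\cdot)\in M(X,\mathcal B)$ and condition~(i) holds. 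For condition~(ii), if $(u,v)\in(X\setminus B_k)^2$ then $n,m\neq k$, giving $\lambda(B_k\cap B_n)=\lambda(B_k\cap B_m)=0$ and hence $P(u,v,B_k)=0$; thus $\mathcal E_{B_k}=\emptyset$ and (ii) is satisfied trivially.

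For projective surjectivity, observe that the constructed $P$ coincides with formula~\eqref{eqn_P(u,v,B_k)} under the identity choice $j_n=n$, since $\lambda(B_k\cap B_n)=\delta_{n,k}\lambda(B_n)$. Theorem~\ref{thm8} then yields immediately that $\mathcal V$ is a projective surjection on $M(X,\mathcal B)$.

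For uniqueness, let $P'$ be any other transition kernel satisfying~(i) and~(ii). By~(ii), for $\lambda\otimes\lambda$-a.e.\ $(u,v)\in(X\setminus B_k)^2$ one has $P'(u,v,B_k)=0$. Specialising to $(u,v)\in B_n\times B_n$ and varying $k\neq n$ forces $P'(u,v,B_n)=1$ a.e., so by~(i) we get $P'(u,v,\cdot)=\lambda|_{B_n}/\lambda(B_n)=P(u,v,\cdot)$ on the diagonal blocks. The delicate case, which I expect to be the main obstacle, is $(u,v)\in B_n\times B_m$ with $n\neq m$: here~(ii) only yields $P'(u,v,B_n)+P'(u,v,B_m)=1$, and the two summands must be forced to equal $1/2$. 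The argument should combine the symmetry $P'(u,v,\cdot)=P'(v,u,\cdot)$ with the block structure from~(i) and the vanishing of $P'(u,v,B_k)$ on $k\notin\{n,m\}$ almost everywhere on $B_n\times B_m$, to show that the induced coefficients $(P')_{ij,k}^{\mathcal B}$ must coincide with those of our constructed $P$; once $P'(u,v,B_n)=1/2$ is established a.e.\ on each off-diagonal block, applying~(i) recovers $P'(u,v,\cdot)=P(u,v,\cdot)$ a.e., completing the uniqueness claim.
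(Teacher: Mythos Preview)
Your approach is the paper's, merely reordered: the paper starts from conditions (i)--(ii), derives the explicit kernel form (thus obtaining uniqueness and existence in one stroke), and then invokes Theorem~\ref{thm8} with $j_n=n$; you construct the kernel first, verify (i)--(ii), apply Theorem~\ref{thm8}, and defer uniqueness to the end. The substance is the same.

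On the step you single out as the main obstacle---forcing $P'(u,v,B_n)=\tfrac12$ on an off-diagonal block $B_n\times B_m$---the paper does not route through the averaged coefficients $(P')_{ij,k}^{\mathcal B}$ at all. It works pointwise: by (i) one writes $P'(u,v,\cdot)=\mu_{\xb(u,v)}^{\mathcal B}$ for some $\xb(u,v)\in S$; condition (ii) is first replaced (``without loss of generality'') by its pointwise version $\mu_{\xb(u,v)}^{\mathcal B}(B_k)=0$ for $(u,v)\in(X\setminus B_k)^2$, which forces $\xb(u,v)$ to be supported on $\{n,m\}$ whenever $(u,v)\in B_n\times B_m$; the kernel symmetry $\xb(u,v)=\xb(v,u)$ is then invoked directly to conclude $\xb(u,v)=\tfrac12\eb_n+\tfrac12\eb_m$, yielding the formula~\eqref{trkerP}. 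So your planned detour through the associated DQSO is unnecessary for uniqueness---the paper's argument stays entirely at the level of the transition kernel and never averages over blocks.
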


\begin{proof}
From (i) we have $P(u,v,\cdot)=\mu_{\xb(u,v)}^\mathcal B$ for any $u,v\in X$. Without loss of generality we may replace the condition (ii)
to the following one
$$
\mu_{\xb(u,v)}^\mathcal{B}(B_k)=0,\ \ \ \ \ \forall (u,v)\in (X\setminus B_k)^2,\ \forall k\in\mathbb N.
$$
Then for any $(u,v)\in B_n\times B_m$ we obtain
$$
\mu_{\xb(u,v)}^\mathcal{B}(B_n)+\mu_{\xb(u,v)}^\mathcal{B}(B_m)=1.
$$
Keeping in mind $\xb(u,v)=\xb(v,u)$, from the last one we have $\xb(u,v)=\frac{1}{2}{\bf e}_n+\frac{1}{2}{\bf e}_m$ for every
$(u,v)\in B_n\times B_m$. Hence, the transitional kernel has the following form
\begin{equation}\label{trkerP}
P(u,v,A)=\frac{\lambda(A\cap B_n)}{2\lambda(B_n)}+\frac{\lambda(A\cap B_m)}{2\lambda(B_m)},\ \ \ \ \forall (u,v)\in B_n\times B_m,\ \forall A\in\cf.
\end{equation}

We notice that \eqref{trkerP} implies \eqref{eqn_P(u,v,B_k)} for the sequence $\{n\}_{n\geq1}$. Then, due to Theorem \ref{thm8} we conclude that
$\mathcal V$ is projective surjection.
The proof is complete.
\end{proof}

\begin{cor}
Let $(X, \cf, \lambda)$ be a measurable space with $\sigma$-finite measure $\lambda$
and $\mathcal B=\{B_k\}_{k\geq1}\in\mathcal P(X)$. There is only one CQSO $\mathcal V$ whose transition kernel satisfies the followings:
\begin{itemize}
\item[(i)] $P(u,v,\cdot)\in M(X,\mathcal B),\ \ \ \forall u,v\in X$;
\item[(ii)] $\lambda(\mathcal E_{A})=0$ for every $A\in\cf$.
\end{itemize}
Moreover, $\mathcal V$ is projective surjection.
\end{cor}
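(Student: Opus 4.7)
The plan is to derive the corollary as a direct specialization of Theorem \ref{Thm9}. The key observation is that hypothesis (ii) of the corollary is a formal strengthening of hypothesis (ii) of Theorem \ref{Thm9}: since each partition cell $B_k$ belongs to the $\sigma$-algebra $\cf$, the assumption $\lambda(\mathcal E_A)=0$ for every $A\in\cf$ in particular yields $\lambda(\mathcal E_{B_k})=0$ for every $k\in\mathbb N$. Hypothesis (i) coincides verbatim with that of Theorem \ref{Thm9}.

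From this observation I would first dispose of uniqueness. Any CQSO $\mathcal V$ meeting the hypotheses of the corollary also meets the hypotheses of Theorem \ref{Thm9}. Applying that theorem, its transition kernel is uniquely determined and must have the explicit form \eqref{trkerP}. This gives the "only one" part of the assertion. Moreover, because the kernel produced by Theorem \ref{Thm9} is already shown there to yield a projective surjection via Theorem \ref{thm8}, the projective-surjectivity conclusion of the corollary is inherited at once.

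To complete the proof I would verify existence, namely that the candidate CQSO whose kernel is \eqref{trkerP} actually satisfies the corollary's stronger condition (ii). Fixing $A\in\cf$, I would decompose $(X\setminus A)^2=\bigcup_{n,m\geq 1}(B_n\setminus A)\times(B_m\setminus A)$, substitute \eqref{trkerP} to get
$$
P(u,v,A)=\frac{\lambda(A\cap B_n)}{2\lambda(B_n)}+\frac{\lambda(A\cap B_m)}{2\lambda(B_m)}\qquad\text{on } B_n\times B_m,
$$
and argue that the set on which this expression is nonzero intersects $(B_n\setminus A)\times(B_m\setminus A)$ in a $\lambda\otimes\lambda$-null set, exploiting the partition identity $B_i\cap B_j=\emptyset$ and the defining property of a CQSO.

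The main obstacle lies precisely in this last verification: it is where the corollary has any content beyond a relabelling of Theorem \ref{Thm9}'s hypotheses. Everything else --- uniqueness, the explicit form \eqref{trkerP} of the kernel, and the projective-surjectivity conclusion --- is a painless transfer from Theorem \ref{Thm9} once one notes that $B_k\in\cf$ for all $k$.
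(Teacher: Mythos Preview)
Your reduction to Theorem \ref{Thm9} is precisely the paper's (implicit) argument: the corollary is stated without proof immediately after Theorem \ref{Thm9}, and the intended reasoning is simply that each $B_k$ lies in $\cf$, so hypothesis (ii) of the corollary implies hypothesis (ii) of Theorem \ref{Thm9}, whence the uniqueness of the kernel and the projective-surjectivity conclusion transfer verbatim.

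Your additional step---checking that the kernel \eqref{trkerP} itself satisfies the corollary's stronger hypothesis (ii)---goes beyond what the paper attempts, and in fact it cannot be carried through as you sketch. Take any $A\in\cf$ with $A\subset B_1$ and $0<\lambda(A)<\lambda(B_1)$. For every $(u,v)\in (B_1\setminus A)^2\subset (X\setminus A)^2$ formula \eqref{trkerP} gives
\[
P(u,v,A)=\frac{\lambda(A\cap B_1)}{2\lambda(B_1)}+\frac{\lambda(A\cap B_1)}{2\lambda(B_1)}=\frac{\lambda(A)}{\lambda(B_1)}>0,
\]
so $(B_1\setminus A)^2\subset\mathcal E_A$, and this rectangle has positive product measure. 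Neither the disjointness $B_i\cap B_j=\emptyset$ nor any property of the operator helps here, since the obstruction already occurs within a single cell $B_1$. Hence the unique kernel produced by Theorem \ref{Thm9} does \emph{not} satisfy $\lambda(\mathcal E_A)=0$ for all $A\in\cf$ once some $B_k$ admits a measurable subset of strictly smaller positive measure. The corollary should therefore be read as the paper evidently intends: ``at most one'' together with projective surjectivity of any such operator, which is exactly what your first two paragraphs establish via Theorem \ref{Thm9}. The existence verification you flag as ``the main obstacle'' is not part of the paper's claim and, as stated, fails.
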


\section{Application}

In this section we are give a direct application of the projective surjectivity of QSO to the existence
of positive solutions of certain nonlinear integral equations.

Let $(X,\cf,\lambda)$ be a measurable space with a $\sigma$-finite measure $\lambda$.
Let us consider the following nonlinear
Hammerstein integral equation:
\begin{equation}\label{inteq}
\int_X\int_X K(u,v,t)x(u)x(v)d\lambda(u)d\lambda(v)=\varphi(t),
\end{equation}
where $K$ is some positive kernel and $\varphi\in L^1_+$ is a given function.

We note that this type of equation appeared in several problems of
astrophysics, mechanics, and biology. Here in the equation,
$K:X^3\to\br$ and $\varphi: X\to\br$ are given functions, and
$x:\O\to\br$ is an unknown one. Generally speaking, in order to
solve the nonlinear Hammerstein integral equation
\eqref{inteq} over some functions space,  one should impose
on some constraints on $K(\cdot,\cdot,\cdot)$. There are several
works where the existence of solutions the above given equation
have been carried out by means of contraction methods (see
\cite{Atkinson,Ban,Krasnoselskii,Some}).
In this section, we are going to another approach for the
existence of positive solutions of \eqref{inteq}. In what follows, we
consider the equation \eqref{inteq} over $L^1$-spaces.

Multiplying \eqref{inteq} by a function $g$ from
$L^\infty$ and integrating it, we obtain
\begin{equation}\label{eqint}
\int_X\int_X\int_X g(t)K(u,v,t)x(u)x(v)d\lambda(u)d\lambda(v)d\lambda(t)=\int_X g(t)\varphi(t)d\lambda(t).
\end{equation}

We stress that the arbitrariness of $g$ implies that \eqref{eqint} and \eqref{inteq} are equivalent.

Now, assume that there is a transition kernel $P$ such that
$$
\int_X\int_X\int_X g(t)K(u,v,t)x(u)x(v)d\lambda(u)d\lambda(v)d\lambda(t)=
\int_X\int_X\int_X g(t)P(u,v,dt)x(u)x(v)d\lambda(u)d\lambda(v)
$$
for all $x\in L^1$ and $g\in L^\infty$.

Then \eqref{eqint} is reduced to
$$
\int_X\int_X\int_X g(t)P(u,v,dt)x(u)x(v)d\lambda(u)d\lambda(v)=\int_X g(t)\varphi(t)d\lambda(t).
$$
Now, taking $g=\chi_A$, $A\in\cf$, we arrive at
\begin{equation*}
(\mathcal V\mu_x)(A)=\mu_\varphi(A),
\end{equation*}
where, as before, $\mu_x(A)=\int_A x(u)d\lambda(u)$. Assume that
$$
\int_X \varphi d\lambda=1.
$$
Hence, the integral equation \eqref{inteq} is reduced to the equation
\begin{equation}\label{eqint3}
\mathcal V\mu=\mu_\varphi,
\end{equation}
where $\mu\in S(X)$.

Hence, the following result is true.
\begin{thm}
Let $(X, \cf, \lambda)$ be a measurable space with $\sigma$-finite measure $\lambda$
and $\mathcal B=\{B_k\}_{k\geq1}\in\mathcal P(X)$. Assume that QSO $\mathcal V$ is a projective surjection on $M(X,\mathcal B)$. Then for any $\mu_\varphi\in M(X,\mathcal B)$
the equation \eqref{eqint3} has a solution in $M(X,\mathcal B)$.
\end{thm}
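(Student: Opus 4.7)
The plan is to observe that this theorem is essentially an unpacking of the definition of projective surjection combined with the machinery developed in Section~4. By the definition given just before the statement, saying that $\mathcal V$ is a projective surjection on $M(X,\mathcal B)$ means precisely that $\mathcal V(M(X,\mathcal B)) = M(X,\mathcal B)$. Hence, given any target $\mu_\varphi \in M(X,\mathcal B)$, the surjectivity hypothesis directly yields some $\mu \in M(X,\mathcal B)$ with $\mathcal V\mu = \mu_\varphi$, which is exactly equation~\eqref{eqint3}. So the one-line proof is simply: apply surjectivity.

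If one prefers a more constructive route that actually explains where $\mu$ comes from, I would pass through the associated DQSO. First, writing $\mu_\varphi = \mu_\yb^{\mathcal B}$ for a uniquely determined $\yb \in S$ via Lemma~\ref{lemmu_x}, I would invoke the projective-surjectivity hypothesis in the form of Lemma~\ref{lemV=V_B}: the fact that $\mathcal V$ is surjective on $M(X,\mathcal B)$ is equivalent (via the bijection $T\xb = \mu_\xb^{\mathcal B}$) to $V_{\mathcal B}$ being surjective on $S$. Choose $\xb \in S$ with $V_{\mathcal B}(\xb) = \yb$, and set $\mu := \mu_\xb^{\mathcal B}$. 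Then Lemma~\ref{lemV=V_B} gives $(\mathcal V\mu)(B_k) = (V_{\mathcal B}(\xb))_k = y_k = \mu_\varphi(B_k)$ for every $k$, and since both $\mathcal V\mu$ and $\mu_\varphi$ lie in $M(X,\mathcal B)$, Remark~\ref{rem_ss} forces $\mathcal V\mu = \mu_\varphi$, giving \eqref{eqint3}.

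There is no genuine obstacle here; the entire difficulty of the result has been absorbed into the preceding sections (establishing Theorem~\ref{thm_surject}, building the correspondence $\mathcal V \leftrightarrow V_{\mathcal B}$ in Lemma~\ref{lemV=V_B}, and verifying invariance of $M(X,\mathcal B)$ in Proposition~\ref{prop_ss}). The only two minor points I would take care to state explicitly are: (a)~the hypothesis says $\mathcal V$ is projective surjection on the \emph{specific} partition $\mathcal B$ named in the theorem, so we are entitled to solve \eqref{eqint3} inside $M(X,\mathcal B)$ rather than merely inside some larger set of measures; and (b)~the resulting $\mu$ lies in $M(X,\mathcal B) \subset S(X)$, so its Radon--Nikodym derivative with respect to $\lambda$ is a nonnegative $L^1$ function, which is the form in which the solution is ultimately useful for the Hammerstein equation \eqref{inteq}.
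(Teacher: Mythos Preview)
Your proposal is correct, and your first paragraph is exactly how the paper treats this theorem: the paper gives no explicit proof at all, simply writing ``Hence, the following result is true'' after reducing the Hammerstein equation to \eqref{eqint3}, so the result is regarded as an immediate consequence of the definition of projective surjection. Your second, more constructive route through the associated DQSO $V_{\mathcal B}$ is sound but goes beyond what the paper does; note only that it implicitly uses the invariance $\mathcal V(M(X,\mathcal B))\subset M(X,\mathcal B)$, which you are entitled to assume since the hypothesis ``$\mathcal V$ is a projective surjection on $M(X,\mathcal B)$'' already packages this in.
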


\section*{Acknowledgments}
The present work is supported by the UAEU UPAR Grant No. G00003447. The first named author (A.F.E.) acknowledges the Ministry of Higher Education (MOHE) and Research Management Centre-UTM, Universiti Teknologi Malaysia (UTM) for the financial support through the research grant (vote number 17J93).


\begin{thebibliography}{9}

    \bibitem{Has_Far_OPQSO_e} Akin H.,  Mukhamedov F. Orthogonality preserving infinite dimensional quadratic stochastic
    operators, \textit{AIP Conf. Proc.} {\bf 1676}(2015), 020008.


\bibitem{Atkinson} Atkinson K.E.,  A survey of numerical methods for solving nonlinear integral equations, \textit{J. Integral Equations} \textbf{4}(1992), 15-46.

\bibitem{BB} Badocha M., Bartoszek W., Quadratic stochastic operators on Banach lattices, \textit{Positivity}, {\bf 22}(2018), 477-492.

\bibitem{Ban} Banas J., Martinon A., Monotonic solutions of a quadratic integral equation of Volterra type, \textit{Comput. Math. Appl.} {\bf 47} (2004) 271--279.

\bibitem{BDP} Bartoszek K., Domsta J.,  Pulka M., Weak Stability of Centred Quadratic Stochastic Operators, \textit{Bulletin Malays. Math. Sci. Soc.}
{\bf 42}(2019), 1813-1830.

\bibitem{BP} Bartoszek K., Pulka M., Prevalence problem in the set of quadratic stochastic operators acting on $L^1$, \textit{Bulletin Malays. Math. Sci. Soc.} {\bf 41}(2018), 159--173.


    \bibitem{1} Bernstein S. N. The solution of a mathematical problem
    concerning the theory of heredity,   \emph{Ann. Math. Statistics}
    {\bf 13}(1924) 53--61.




    \bibitem{CLN} Cui L.B., Li W., Ng M.K., Birkhoff--von Neumann theorem for multistochastic tensors, \textit{SIAM Journal on Matrix Analysis and Applications}
{\bf 35} (2014), 956--973.

    \bibitem{F} Frank T.D., \textit{  Nonlinear Fokker-Planck Equations Fundamentals and Applications}, Springer-Verlag, Berlin, 2005.

\bibitem{GSM} Ganikhodjaev N., Saburov M., Muhitdinov R. On Lebesgue nonlinear transformations, \textit{Bull. Korean
Math. Soc.} {\bf 54}(2017), 607-618.


    \bibitem{6} Ganikhodzhaev R., Mukhamedov F., Rozikov U. Quadratic stochastic operators and processes: results and open problems, \emph{Infin.
        Dimens. Anal. Quant. Prob. Relat. Top.} {\bf 14}(2011) 279--335.

   \bibitem{GMS19}  Ganikhodzhaev R., Mukhamedov F., Saburov M. Elliptic
quadratic operator equations, \textit{Acta Appl. Math.} {\bf 159}(2019), no. 1,  29--74.


%

    \bibitem{J2013} Jamilov U. U. Quadratic stochastic operators corresponding to graphs, \textit{Lobachevskii  J. Math} {\bf 34}(2013), 148--151


    \bibitem{Kol} Kolokoltsov V. N., \textit{Nonlinear Markov Processes and Kinetic Equations}, Cambridge Univ. Press, New York,
    2010.

\bibitem{Krasnoselskii}  Krasnosel'skii M.A., \textit{Topological methods in the theory of nonlinear integral equations}, Pergamon (1964).


    \bibitem{LN} Li W.,  Ng M.K. , On the limiting probability distribution of a transition probability tensor, \textit{ Linear And Multilinear Algebra} {\bf 62} (2014),
362--385.

\bibitem{LZ}  Li C.-K., Zhang S., Stationary probability vectors of
    higher-order Markov chains, \textit{Linear Algebra Appl.} {\bf 473}
    (2015) 114--125.

    \bibitem{11} Lyubich Yu. I.  \textit{Mathematical structures in population
        genetics}, Berlin, Springer-Verlag, 1992.

 \bibitem{Mc}  McKean H.P.  A class of Markov processess associated with nonlinear parabolic equations, \textit{Proc. Natl. Acad. Sci. USA} {\bf 56}(1966), 1907--1911.


    \bibitem{M2000} Mukhamedov  F. M.  On infinite dimensional Volterra operators,
    \textit{Russian Math. Surveys} {\bf 55}(2000), 1161--1162

    \bibitem{Far_Has_Temir} Mukhamedov  F., Akin H. and Temir S.  On infinite dimensional
    quadratic Volterra operators, \textit{J. Math. Anal. Appl.} {\bf
        310}(2005), 533--556.

    \bibitem{FOA} Mukhamedov  F., Khakimov O.N., Embong A.F. On Surjective second
    order non-linear Markov operators and associated nonlinear integral equations, {\it Positivity}, {\bf 22}(2018), 1445--1459.

\bibitem{FOA2} Mukhamedov F., Khakimov O., Embong A.F., Solvability of nonlinear integral equations and surjectivity of non-linear Markov operators, \textit{Math. Methods in Appl. Sci.} {\bf 43} (2020), no. 15, 9102-9118.


    \bibitem{ME18}  Mukhamedov  F.,  Embong A.F. On non-linear Markov operators: surjectivity vs
    orthogonal preserving property,
    \textit{Lin. Multilin. Alg.} {\bf 66}(2018),  2183--2190.

    \bibitem{ME19}  Mukhamedov  F.,  Embong A.F. Infinite dimensional orthogonality preserving
nonlinear Markov operators, \textit{Lin. Multilin. Alg.}  DOI: 10.1080/03081087.2019.1607241

    \bibitem{MER17}  Mukhamedov  F.,  Embong A.F., Rosli A. Orthogonal preserving and surjective cubic stochastic operators,
    \textit{Ann Funct. Anal.} {\bf 8} (2017), 490-- 501.


    \bibitem{MG2015}  Mukhamedov F. and Ganikhodjaev N. \textit{Quantum Quadratic Operators and
        Processes}, Berlin, Springer, 2015

    \bibitem{taha}  Mukhamedov F. and Taha M. H., On Volterra and
    orthoganality preserving quadratic stochastic
    operators,  \textit{Miskloc Math. Notes} {\bf 17}(2016) 457--470.


\bibitem{Some} Some B., Some recent numerical methods for solving nonlinear Hammerstein integral equations, \textit{Math and Computer Modelling}, 18:9, 55--62 (1993).


    \bibitem{Q} Qi L., Eigenvalues of a real supersymmetric tensor, \textit{J. Symbolic
        Comput.}, {\bf 40}(2005), 1302--1324.

\bibitem{QL}  Qi L., Luo Z. \textit{ Tensor Analysis: Spectral Theory and Special Tensors},  SIAM, 2017.


    \bibitem{Raf} Raftery A., A model of high-order markov chains,
    \textit{J. Royal Statist. Soc.} {\bf 47}(1985), 528--539.



\end{thebibliography}
\end{document}